\theoremstyle{plain}
\newtheorem{thm}{Theorem}
\newtheorem{lem}{Lemma}
\newtheorem{cor}{Corollary}
\theoremstyle{definition}
\newtheorem{definition}{Definition}
\newtheorem{exmp}{Example}
\theoremstyle{remark}
\newtheorem{remark}{Remark}
\newcommand{\be}{\begin{enumerate}}
\newcommand{\ee}{\end{enumerate}}
\newcommand{\Tr}{\text{Tr}}
\newcommand{\TF}[4]{\left [ \begin{array}{c|c}{#1} & {#2} \\ \hline {#3} & {#4} \end{array} \right ]}
\newcommand{\D}{\mathsf D}
\newcommand{\R}{\mathsf R}
\newcommand{\Adj}{\text{Adj}}
\newcommand{\Lap}{\text{Lap}}
\newcommand{\Prob}{\mathbb P}
\newcommand{\Exp}{\mathbb E}
\begin{document}
%
\title{Differentially Private Filtering}
%
%
%

\author{Jerome~Le~Ny,~\IEEEmembership{Member,~IEEE,}
        and~George~J.~Pappas,~\IEEEmembership{Fellow,~IEEE}
\thanks{J. Le Ny is with the department of Electrical Engineering, Ecole Polytechnique de Montreal,
QC H3T 1J4, Canada. G.~Pappas is with the Department of Electrical and Systems Engineering, 
University of Pennsylvania, Philadelphia, PA 19104, USA. {\tt\small jerome.le-ny@polymtl.ca, 
pappasg@seas.upenn.edu.}}
\thanks{Preliminary versions of this paper will appear at Allerton 2012 and CDC 2012.}
}

%
%

\markboth{}
{Le Ny and Pappas: Differentially Private Filtering}
%



\maketitle

\begin{abstract}
Emerging systems such as smart grids or intelligent transportation systems 
often require end-user applications to continuously send information to external 
data aggregators performing monitoring or control tasks. This can result in 
an undesirable loss of privacy for the users in exchange of the benefits provided 
by the application.
Motivated by this trend, this paper introduces privacy concerns in a system theoretic context, 
and addresses the problem of releasing filtered signals that respect the privacy 
of the user data streams. Our approach relies on a formal notion of privacy from the 
database literature, called \emph{differential privacy}, which provides strong privacy 
guarantees against adversaries with arbitrary side information. 
Methods are developed to approximate a given filter by a differentially private 
version, so that the distortion introduced by the privacy mechanism is minimized. 
Two specific scenarios are considered. First, the notion of differential privacy is
extended to dynamic systems with many participants contributing independent 
input signals. Kalman filtering is also discussed in this context, when a released 
output signal must preserve differential privacy for the measured signals or 
state trajectories of the individual participants.
Second, differentially private mechanisms are described to approximate stable filters 
when participants contribute to a single event stream, extending previous work 
on differential privacy under continual observation.
\end{abstract}

\begin{IEEEkeywords}
Privacy, Filtering, Kalman Filtering, Estimation
\end{IEEEkeywords}

%
\IEEEpeerreviewmaketitle

\section{Introduction}
%
%
%
%


\IEEEPARstart{A}{ rapidly} growing number of applications requires users to release 
private data streams to third-party applications for signal processing and 
decision-making purposes. Examples include smart grids, population health monitoring, 
online recommendation systems, traffic monitoring, fuel consumption optimization, 
and cloud computing for industrial control systems.
For privacy or security reasons, the participants benefiting from the services
provided by these systems generally do not want to release 
more information than strictly necessary.
In a smart grid for example, a customer could receive 
better rates in exchange of continuously sending to the utility company
her instantaneous power consumption, thereby helping to improve the 
demand forecast mechanism. In doing so however, she is also informing the utility or 
a potential eavesdropper about the type of appliances she owns 
as well as her daily activities \cite{Hart92_loadMonitoring}.
Similarly, individual private signals can be recovered from published outputs 
aggregated from many users, and anonymizing a dataset is not enough
to guarantee privacy, due to the existence of public side information.
This is demonstrated in \cite{Narayanan08_netflixBreach, Calandrino11_privacyAttackCollabFilt} 
for example, where private ratings and transactions from individuals on commercial websites are 
successfully inferred with the help of information from public recommendation systems. 
Emerging traffic monitoring systems using position measurements from 
smartphones \cite{Hoh11_VTL_trafficMonitoring}
is another application area where individual position traces can be re-identified 
by correlating them with public information such as a person's location of residence 
or work \cite{Hoh11_VTL_trafficMonitoring}.
Hence the development of rigorous privacy preserving mechanisms is crucial 
to address the justified concerns of potential users and thus encourage an increasing level 
of participation, which can in turn greatly improve the efficiency of these large-scale systems.

Precisely defining what constitutes a breach of privacy is a delicate task.
A particularly successful recent definition of privacy used in the database literature is that of
\emph{differential privacy} \cite{Dwork06_DPcalibration}, which is
motivated by the fact that any useful information provided by a dataset 
about a group of people can compromise the privacy of specific individuals
due to the existence of side information. 
Differentially private mechanisms randomize their responses to 
dataset analysis requests and guarantee that whether or not an individual
chooses to contribute her data only marginally changes the distribution
over the published outputs. As a result, even an adversary cross-correlating
these outputs with other sources of information cannot infer much more
about specific individuals after publication than before \cite{Kasiviswanathan08_DP_sideInfo}.

Most work related to privacy is concerned with the analysis of static databases 
\cite{Dwork06_DPcalibration, Dwork06_DPgaussian, Roth10_DP_thesis, Li10_DPmatrix1}, 
whereas cyber-physical systems clearly emphasize the need for mechanisms working with dynamic, 
time-varying data streams. Recently, the problem of releasing differentially private statistics when
the input data takes the form of a binary stream has been considered in \cite{Dwork10_DPcounter, Chan11_DPcontinuous, Bolot11_DPdecayingSums}. This work is discussed in more details in
Section \ref{section: related work}. A differentially private version of the iterative averaging algorithm
for consensus is considered in \cite{Huang12_DPconsensus}. In this case, the input data to protect
consists of the initial values of the participants and is thus a single vector, but the update mechanism 
subject to privacy attacks is dynamic. Information-theoretic approaches have also been proposed 
to guarantee some level of privacy when releasing time series \cite{Varodayan11_batteryPrivacy,
Sankar11_privacyInfoTheoretic}. However, the resulting privacy guarantees only hold if the statistics
of the participants' data streams obey the assumptions made (typically stationarity, dependence and 
distributional assumptions), and require the explicit statistical modeling of all available side information.
This task is very difficult in general as new, as-yet-unknown side information can become available after 
releasing the results. In contrast, differential privacy is a worst-case notion that holds independently of 
any probabilistic assumption on the dataset, and controls the information leakage against adversaries 
with arbitrary side information \cite{Kasiviswanathan08_DP_sideInfo}. Once such a privacy guarantee 
is enforced, one can still leverage potential additional statistical information about the dataset 
to improve the quality of the outputs. 

The main contribution of this paper is to introduce privacy concerns in the context of systems theory. 
Section \ref{section: differential privacy background} provides some technical background on 
differential privacy. We then formulate in Section \ref{section: DP linear systems} the problem 
of releasing the output of a dynamical system while preserving differential privacy for the driving inputs, 
assumed to originate from different participants. It is shown that accurate results can be published for 
systems with small incremental gains with respect to the individual input channels. These results are 
extended in Section \ref{section: private KF} to the problem of designing a differentially private Kalman filter, 
as an example of situation where additional information about the process generating the individual signals 
can be leveraged to publish more accurate results.
Finally, Section \ref{section: binary streams} is motivated by the recent work on ``differential privacy under 
continual observation'' \cite{Dwork10_DPcounter, Chan11_DPcontinuous}, and considers systems processing 
a single integer-valued signal describing the occurrence of events originating from many individual participants.
Differentially private approximations of the systems are proposed with the goal of minimizing the mean 
squared error introduced by the privacy preserving mechanism. Some additional references to the related
literature are provided in Section \ref{section: related work}.


\section{Differential Privacy}	\label{section: differential privacy background}

In this section we review the notion of differential privacy \cite{Dwork06_DPcalibration} 
as well as some basic mechanisms that can be used to achieve it when the released data 
belongs to a finite-dimensional vector space.
In the original papers on differential privacy \cite{Blum05_sulq, Dwork06_DPcalibration, 
Dwork06_DPgaussian}, a sanitizing mechanism has access to a database and provides 
noisy answers to queries submitted by data analysts wishing to draw inference from the data.
However, the notion of differential privacy can be defined for fairly general types of datasets. 
Most of the results in this section are known, but in some cases we provide more precise or 
slightly different versions of some statements made in previous work. We refer the reader 
to the surveys by Dwork, e.g., \cite{Dwork_ICAL06_DP}, for additional background on 
differential privacy.

\subsection{Definition}

Let us fix some probability space $(\Omega, \mathcal F, \Prob)$. Let $\D$ be a space of 
datasets of interest (e.g., a space of data tables, or a signal space). A \emph{mechanism} 
is just a map $M: \D \times \Omega \to \R$, for some measurable output space $(\R,\mathcal M)$, 
where $\mathcal M$ denotes a $\sigma$-algebra, such that for any element $d \in \D$, $M(d,\cdot)$ 
is a random variable, typically written simply $M(d)$. A mechanism can be viewed as a probabilistic 
algorithm to answer a query $q$, which is a map $q: \D \to \R$. In some cases, we index the 
mechanism by the query $q$ of interest, writing $M_q$.

\begin{exmp}	\label{ex: space of databases}
Let $\D = \mathbb R^{n}$, with each real-valued entry of $d \in \D$ corresponding to some sensitive
information for an individual contributing her data, e.g., her salary. A data analyst would like to know 
the average of the entries of $d$, i.e., the query is $q: \D \to \mathbb R$ with 
$q(d) = \frac{1}{n} \sum_{i=1}^n d_{i}$. As detailed in Section \ref{section: basic mech}, a typical 
mechanism $M_q$ to answer this query in a differentially private way computes $q(d)$ and blurs 
the result by adding a random variable $Y: \Omega \to \mathbb R$, so that
$M_q: \D \times \Omega \to \mathbb R$ with $M_{q}(d) = \frac{1}{n} \sum_{i=1}^n d_{i} + Y$.
Note that in the absence of perturbation $Y$, an adversary who knows $n$ and all $d_j$ for $j \geq 2$ 
can recover the remaining entry $d_1$ exactly if he learns $q(d)$. This can deter people
from contributing their data, even though broader participation improves the accuracy 
of the analysis, which can provide useful knowledge to the population as a whole.
\end{exmp}

Next, we introduce the definition of differential privacy \cite{Dwork06_DPcalibration, Dwork06_DPgaussian}.  
Intuitively, in the following definition, $\D$ is a space of datasets of interest, and we have a symmetric binary 
relation $\Adj$ on $\D$, called adjacency, such that $\Adj(d,d')$ if and only if $d$ and $d'$ differ by the 
data of a single participant. 

\begin{definition}	\label{def: differential privacy original}
Let $\D$ be a space equipped with a symmetric binary relation denoted $\Adj$, and let $(\R, \mathcal M)$ 
be a measurable space. Let $\epsilon, \delta \geq 0$. A mechanism $M: \D \times \Omega \to \R$ is 
$(\epsilon, \delta)$-differentially private if for all $d,d' \in \D$ such that $\Adj(d,d')$, we have
\begin{align}	\label{eq: standard def approximate DP original}
\Prob(M(d) \in S) \leq e^{\epsilon} \Prob(M(d') \in S) + \delta, \;\; \forall S \in \mathcal M. 
\end{align}
If $\delta=0$, the mechanism is said to be $\epsilon$-differentially private. 
\end{definition}

Intuitively, this definition says that for two adjacent datasets, the distributions over the outputs of the mechanism 
should be close.  The choice of the parameters $\epsilon, \delta$ is set by the privacy policy. Typically 
$\epsilon$ is taken to be a small constant, e.g., $\epsilon \approx 0.1$ or perhaps even $\ln 2$ or $\ln 3$. 
The parameter $\delta$ should be kept small as it controls the probability of certain significant losses 
of privacy, e.g., when a zero probability event for input $d'$ becomes an event with positive probability 
for input $d$ in (\ref{eq: standard def approximate DP original}).

\begin{remark}
The definition of differential privacy depends on the choice of $\sigma$-algebra $\mathcal M$ 
in Definition \ref{def: differential privacy original}. When we need to state this $\sigma$-algebra
explicitly, we write $M: \D \times \Omega \to (\R,\mathcal M)$. 
In particular, this $\sigma$-algebra should be sufficiently large, since (\ref{eq: standard def 
approximate DP original}) is trivially satisfied by any mechanism if $\mathcal M = \{\emptyset, \R\}$. 
\end{remark}

%
%
%

The next lemma provides alternative technical characterizations of differential privacy and appears to be new. 
First, we introduce some notation. We call a signed measure $\nu$ on $(\R, \mathcal M)$ $\delta$-bounded if it satisfies 
$\nu(S) \leq \delta$ for all $S \in \mathcal M$ \cite[p.180]{Dudley02_book}. A measure is sometimes called positive measure for emphasis.
For $(\R, \mathcal M)$ a measurable space, we denote by $\mathcal F_b(\R)$ the space of bounded real-valued measurable functions 
on $\R$ and we define $\mu g := \int g \, d \mu$ for $g \in \mathcal F_b(\R)$ and $\mu$ a positive measure on $\mathcal M$.

\begin{lem}	\label{lemma: technical result equivalence DP}
The following are equivalent: 
\begin{enumerate}[(a)]
\item $M$ is $(\epsilon,\delta)$-differentially private, satisfying (\ref{eq: standard def approximate DP original}).	\label{def: DP 1}
\item For all $d,d' \in \D$ such that $\Adj(d,d')$, there exists a $\delta$-bounded positive measure $\mu^{d,d'}$ 
on $(\R,\mathcal M)$ such that  we have \label{def: DP bis}	
\begin{align} \label{eq: DP def bis}
\Prob(M(d) \in S) \leq e^{\epsilon} \Prob(M(d') \in S) + \mu^{d,d'}(S), \;\; \forall S \in \mathcal M.
\end{align}
\item For all $d,d' \in \D$ such that $\Adj(d,d')$, there exists a $\delta$-bounded positive measure $\mu^{d,d'}$ 
on $(\R,\mathcal M)$ such that for all $g \in \mathcal F_b(\R)$, we have \label{def: DP 2}	
\begin{align} \label{eq: DP def 2}
\Exp(g(M(d))) \leq e^{\epsilon} \Exp ( g(M(d')) ) + \mu^{d,d'} g.
\end{align} 
\end{enumerate}
\end{lem}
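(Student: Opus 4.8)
The plan is to treat condition (\ref{def: DP bis}) as the hinge and to establish the two equivalences (\ref{def: DP 1}) $\Leftrightarrow$ (\ref{def: DP bis}) and (\ref{def: DP bis}) $\Leftrightarrow$ (\ref{def: DP 2}) separately. Throughout I fix adjacent $d,d'$ and write $P_d$ and $P_{d'}$ for the laws (pushforward measures under $M$) of the random variables $M(d)$ and $M(d')$ on $(\R,\mathcal M)$. These are probability measures, so $\lambda := P_d - e^{\epsilon} P_{d'}$ is a finite signed measure on $(\R,\mathcal M)$, and I note that (\ref{eq: standard def approximate DP original}) is exactly the statement that $\lambda(A) \le \delta$ for every $A \in \mathcal M$.

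The two easy directions are immediate. For (\ref{def: DP bis}) $\Rightarrow$ (\ref{def: DP 1}), since $\mu^{d,d'}$ is $\delta$-bounded we have $\mu^{d,d'}(S) \le \delta$ for every $S$, so (\ref{eq: DP def bis}) at once yields (\ref{eq: standard def approximate DP original}). For (\ref{def: DP 2}) $\Rightarrow$ (\ref{def: DP bis}) I would simply instantiate (\ref{eq: DP def 2}) at the bounded measurable indicator $g = \mathbf 1_S$, which returns (\ref{eq: DP def bis}) verbatim (and, combined with $\delta$-boundedness, directly recovers (\ref{def: DP 1}) as well).

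The substantive direction is (\ref{def: DP 1}) $\Rightarrow$ (\ref{def: DP bis}), where the measure $\mu^{d,d'}$ must be constructed, and I expect this to be the crux of the lemma. Here I would invoke the Hahn--Jordan decomposition of $\lambda$: there is a measurable positive set $P$ with complement $N$, and the Jordan decomposition $\lambda = \lambda^+ - \lambda^-$ into positive measures with $\lambda^+(S) = \lambda(S \cap P)$ and $\lambda^-(S) = -\lambda(S \cap N)$. I then take $\mu^{d,d'} := \lambda^+$. Two checks remain. First, $\delta$-boundedness: applying $\lambda(A) \le \delta$ to $A = S \cap P$ gives $\mu^{d,d'}(S) = \lambda(S \cap P) \le \delta$. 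Second, the inequality: since $\lambda^- \ge 0$ we have $\lambda(S) = \lambda^+(S) - \lambda^-(S) \le \mu^{d,d'}(S)$, hence $P_d(S) = e^{\epsilon} P_{d'}(S) + \lambda(S) \le e^{\epsilon} P_{d'}(S) + \mu^{d,d'}(S)$, which is (\ref{eq: DP def bis}). The whole difficulty is concentrated in recognizing that the positive part of $\lambda$ is the right measure and that its $\delta$-bound is read off from the pointwise-in-$S$ inequality (\ref{eq: standard def approximate DP original}).

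It remains to prove (\ref{def: DP bis}) $\Rightarrow$ (\ref{def: DP 2}). Keeping the same $\mu^{d,d'}$, I would use the layer-cake representation: for nonnegative bounded measurable $g$, one has $g(x) = \int_0^\infty \mathbf 1_{\{g > t\}}(x)\, dt$ with each level set $\{g > t\}$ in $\mathcal M$. Integrating against $P_d$, applying Tonelli (valid as the integrand is nonnegative), bounding $P_d(\{g > t\})$ by (\ref{eq: DP def bis}) for each $t$, and integrating back gives $\Exp(g(M(d))) = \int_0^\infty P_d(\{g>t\})\, dt \le e^{\epsilon}\int_0^\infty P_{d'}(\{g>t\})\, dt + \int_0^\infty \mu^{d,d'}(\{g>t\})\, dt = e^{\epsilon}\Exp(g(M(d'))) + \mu^{d,d'} g$; the final equality is again layer-cake, now for the finite measure $\mu^{d,d'}$ (finite since $\mu^{d,d'}(\R) \le \delta$). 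The one point to watch is nonnegativity of $g$, which is what legitimizes both the Tonelli step and the identity $\mu^{d,d'} g = \int_0^\infty \mu^{d,d'}(\{g>t\})\, dt$; the indicators $\mathbf 1_S$ are the special case that closes the cycle back to (\ref{def: DP bis}).
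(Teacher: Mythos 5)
Your proposal is correct and follows essentially the same route as the paper: the crux, taking $\mu^{d,d'}$ to be the positive part of the signed measure $S \mapsto \Prob(M(d)\in S) - e^{\epsilon}\Prob(M(d')\in S)$ via the Hahn--Jordan decomposition, is exactly the paper's choice of the positive variation $\mu^{d,d'}(S)=\sup\{\nu^{d,d'}(G): G\subset S\}$ of that same signed measure, and your easy directions (instantiating at $g=1_S$, reading off $\delta$-boundedness) coincide with the paper's. The only divergence is in (\ref{def: DP bis}) $\Rightarrow$ (\ref{def: DP 2}), where you use the layer-cake formula with Tonelli while the paper approximates $g$ by simple functions and invokes dominated convergence; both are routine and interchangeable here. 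Your caveat about nonnegativity of $g$ is well taken and is not a defect of your argument relative to the paper's: inequality (\ref{eq: DP def 2}) actually fails for, e.g., $g\equiv -1$ when $\epsilon>0$, so $\mathcal F_b(\R)$ must be read as the bounded \emph{nonnegative} measurable functions --- a restriction the paper's simple-function step also tacitly requires (negative midpoints $c_i$ would reverse the inequality), and which suffices for the only use of this clause, in Theorem \ref{thm: resilience to post-processing}, where $g=k(\cdot,S)\in[0,1]$.
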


\begin{proof}
(\ref{def: DP 1}) $\Rightarrow$ (\ref{def: DP bis}). Suppose that $M$ is $(\epsilon,\delta)$-differentially private.
Define the signed measure $\nu^{d,d'}$ by $S \mapsto \nu^{d,d'}(S) := \Prob(M(d) \in S) - e^\epsilon \Prob(M(d') \in S)$
\cite[Section 5.6]{Dudley02_book}. By the definition (\ref{eq: standard def approximate DP original}), $\nu^{d,d'}$ is 
$\delta$-bounded. Let $\mu^{d,d'}$ be the positive variation of $\nu^{d,d'}$, i.e., $\mu^{d,d'}(S) = \sup\{\nu(G): G \subset S\}$, 
for all $S \in \mathcal M$. Then $\mu^{d,d'}$ is a positive measure \cite[Section 5.6]{Dudley02_book}, is $\delta$-bounded since
$\nu_{d,d'}$ is, and since $\nu^{d,d'}(S) \leq \mu^{d,d'}(S)$ for all $S \in \mathcal M$, we have (\ref{eq: DP def bis}).

(\ref{def: DP bis}) $\Rightarrow$ (\ref{def: DP 2}): 
Let $B$ be a bound on $g$. For any $k \geq 1$, we divide the interval $[-B,B]$ in $k$ 
consecutive intervals $I_i$ of length $2B/k$, and we let $A_i = g^{-1}(I_i)$ and $c_i$ be
the mid-point of the interval $I_i$. Then (\ref{def: DP 2}) holds for the simple function 
$\sum_{i=1}^k c_i 1_{A_i}$, and these functions approximate $g$. We conclude using 
the dominated convergence theorem. 

(\ref{def: DP 2}) $\Rightarrow$ (\ref{def: DP 1}): Take $g = 1_S$ and use the fact that $\mu^{d,d'}$ is $\delta$-bounded.
\end{proof}

A fundamental property of the notion of differential privacy is that no additional privacy loss can occur by simply manipulating an output 
that is differentially private. This result is similar in spirit to the data processing inequality from information theory \cite{Cover91_infoTheory}. 
To state it, recall that a probability kernel between two measurable spaces $(\R_1,\mathcal M_1)$ and $(\R_2,\mathcal M_2)$ is
a function $k: \R_1 \times \mathcal M_2 \to [0,1]$ such that $k(\cdot,S)$ is measurable for each $S \in \mathcal M_2$ and
$k(r,\cdot)$ is a probability measure for each $r \in \R_1$.

\begin{thm}[Resilience to post-processing]	\label{thm: resilience to post-processing}
Let $M_1: \D \times \Omega \to (\R_1,\mathcal M_1)$ be an $(\epsilon,\delta)$-differentially private mechanism.
Let $M_2: \D \times \Omega \to (\R_2,\mathcal M_2)$ be another mechanism, such that there exists
a probability kernel $k: \R_1 \times \mathcal M_2 \to [0,1]$ verifying
\begin{align}	\label{eq: post-processing definition}
\Prob(M_2(d) \in S | M_1(d)) = k(M_1(d),S), \; \text{a.s.}, \forall S \in \mathcal M_2, \forall d \in \D.
\end{align}
Then $M_2$ is $(\epsilon,\delta)$-differentially private. 
\end{thm}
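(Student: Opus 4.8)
The plan is to reduce the set-based privacy inequality to the functional characterization of differential privacy established in Lemma \ref{lemma: technical result equivalence DP}, and then to recognize the post-processed output probability as the expectation of a bounded measurable function of $M_1$. The central observation is that the kernel $k$ does not depend on $d$, so that conditioning on $M_1(d)$ factors the privacy question entirely through $M_1$, whose privacy we already control.

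First I would fix an adjacent pair $d,d'$ with $\Adj(d,d')$ and a set $S \in \mathcal M_2$, and define $g_S: \R_1 \to [0,1]$ by $g_S(r) := k(r,S)$. By the definition of a probability kernel, $k(\cdot,S)$ is measurable, and since $k(r,\cdot)$ is a probability measure we have $0 \le g_S \le 1$; hence $g_S \in \mathcal F_b(\R_1)$. Next, using the post-processing hypothesis (\ref{eq: post-processing definition}) together with the tower property of conditional expectation, I would write
\[
\Prob(M_2(d) \in S) = \Exp\big[\Prob(M_2(d) \in S \mid M_1(d))\big] = \Exp\big[k(M_1(d),S)\big] = \Exp\big[g_S(M_1(d))\big],
\]
and likewise $\Prob(M_2(d') \in S) = \Exp[g_S(M_1(d'))]$.

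Now I would invoke the characterization (\ref{eq: DP def 2}) in Lemma \ref{lemma: technical result equivalence DP}, applied to the $(\epsilon,\delta)$-differentially private mechanism $M_1$: there is a $\delta$-bounded positive measure $\mu^{d,d'}$ on $(\R_1,\mathcal M_1)$ such that for every $g \in \mathcal F_b(\R_1)$ one has $\Exp[g(M_1(d))] \le e^{\epsilon}\,\Exp[g(M_1(d'))] + \mu^{d,d'} g$. Taking $g = g_S$ and substituting the two identities above yields
\[
\Prob(M_2(d) \in S) \le e^{\epsilon}\,\Prob(M_2(d') \in S) + \mu^{d,d'} g_S.
\]
It then remains to control the residual term: since $0 \le g_S \le 1$ and $\mu^{d,d'}$ is a positive, $\delta$-bounded measure, $\mu^{d,d'} g_S = \int g_S\, d\mu^{d,d'} \le \mu^{d,d'}(\R_1) \le \delta$. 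As $S \in \mathcal M_2$ was arbitrary, this establishes (\ref{eq: standard def approximate DP original}) for $M_2$ and completes the argument.

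The step I expect to require the most care is precisely why one cannot argue directly from the set-form inequality (\ref{eq: standard def approximate DP original}) for $M_1$: the event $\{M_2(d) \in S\}$ is generally not of the form $\{M_1(d) \in A\}$ for a fixed $A \in \mathcal M_1$, because the kernel randomizes the map from $\R_1$ to $\R_2$. This is exactly what forces the detour through the expectation characterization of Lemma \ref{lemma: technical result equivalence DP}, which internally absorbs the simple-function approximation needed to pass from indicator sets to the bounded function $g_S$. The remaining verifications are routine: the measurability of $g_S$ is immediate from the kernel definition, and the interchange of expectation and conditioning is the standard law of iterated expectations.
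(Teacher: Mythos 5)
Your proposal is correct and follows essentially the same route as the paper's proof: the tower property reduces $\Prob(M_2(d)\in S)$ to $\Exp[k(M_1(d),S)]$, and the functional characterization (\ref{eq: DP def 2}) of Lemma \ref{lemma: technical result equivalence DP} applied to $k(\cdot,S)$ does the rest. The only (immaterial) difference is at the end: the paper packages the residual $S \mapsto \int_{\R_1} k(m_1,S)\,d\mu^{d,d'}(m_1)$ as a $\delta$-bounded measure on $\R_2$ and invokes the equivalence again, whereas you bound it pointwise by $\mu^{d,d'}(\R_1)\le\delta$ using $0\le k(\cdot,S)\le 1$ --- both are valid.
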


Note that in (\ref{eq: post-processing definition}), the kernel $k$ is not allowed to depend on the dataset $d$. In other words, this condition 
says that once $M_1(d)$ is known, the distribution of $M_2(d)$ does not further depend on $d$. The theorem shows that a mechanism $M_2$ 
accessing a dataset only indirectly via the output of a differentially private mechanism $M_1$ cannot weaken the privacy guarantee. 
Hence post-processing can be used freely to improve the \emph{accuracy} of an output, as in Section \ref{section: binary streams} for example, 
without worrying about a possible loss of privacy.


\begin{proof}
To the best of our knowledge, there is no previous proof of the resilience to post-processing theorem available for the case of randomized post-processing 
and $\delta > 0$.
Let $M_1$ be $(\epsilon,\delta)$-differentially private. We have, for two adjacent elements $d, d' \in \D$ and for any $S \in \mathcal M_2$
\ifthenelse {\boolean{TwoColEq}} 
{
\begin{align*}
\Prob(M_2(d) \in S) &= \Exp[\Prob(M_2 \in S | M_1(d))] = \Exp[k(M_1(d),S)] \\
& \leq e^{\epsilon} \Exp[k(M_1(d'),S)]
+ \int_{\R_1} k(m_1,S) \; d \mu(m_1) \\
& = e^{\epsilon} \Prob(M_2(d') \in S) + \nu(S).
\end{align*}
}
{
\begin{align*}
\Prob(M_2(d) \in S) &= \Exp[\Prob(M_2 \in S | M_1(d))] = \Exp[k(M_1(d),S)] \\
& \leq e^{\epsilon} \Exp[k(M_1(d'),S)]
+ \int_{\R_1} k(m_1,S) \; d \mu^{d,d'}(m_1) \\
& = e^{\epsilon} \Prob(M_2(d') \in S) + \nu^{d,d'}(S).
\end{align*}
}
The first equality is just the smoothing property of conditional expectations, and the inequality comes
from (\ref{eq: DP def 2}) applied to the function $k(\cdot,S)$.
Since $k$ is a probability kernel, the integral on the second line defines a measure $\nu^{d,d'}$ on $\R_2$, 
which is $\delta$-bounded since $\nu^{d,d'}(\R_2) = \int_{\R_1} k(m_1,\R_2) d \mu^{d,d'}(m_1) 
= \int_{\R_1} 1 \; d \mu^{d,d'}(m_1) = \mu^{d,d'}(\R_1) \leq \delta$.
\end{proof}


\subsection{Basic Differentially Private Mechanisms}	\label{section: basic mech}

A mechanism that throws away all the information in a dataset is obviously private, 
but not useful, and in general one has to trade off privacy for utility when 
answering specific queries. We recall below two basic mechanisms that can be used to
answer queries in a differentially private way.
We are only concerned in this section with queries that return numerical answers, 
i.e., here a query is a map $q: \D \to \R$, where the output space $\R$ equals $\mathbb R^k$ for some $1 \leq k < \infty$,
is equipped with a norm  denoted $\| \cdot \|_\R$,
and the $\sigma$-algebra $\mathcal M$ on $\R$ is taken to be the standard 
Borel $\sigma$-algebra, denoted $\mathcal R^k$. 
The following quantity plays an important
role in the design of differentially private mechanisms \cite{Dwork06_DPcalibration}.

\begin{definition}	\label{defn: sensitivity}
Let $\D$ be a space equipped with an adjacency relation $\Adj$.
The sensitivity of a query $q: \D \to \R$ is defined as
$
\Delta_\R q := \max_{d,d':\Adj(d,d')} \|q(d) - q(d')\|_\R.
$
In particular, for $\R = \mathbb R^k$ equipped with the $p$-norm 
$\| x \|_p = \left( \sum_{i=1}^k |x_i|^p \right)^{1/p}$ for $p \in [1,\infty]$, we denote the $\ell_p$ sensitivity by $\Delta_p q$.
\end{definition}

\subsubsection{The Laplace Mechanism} This mechanism, proposed in  \cite{Dwork06_DPcalibration}, 
modifies an answer to a numerical query by adding i.i.d. zero-mean noise distributed according 
to a Laplace distribution. Recall that the Laplace distribution with mean zero and scale parameter $b$, 
denoted $\Lap(b)$, has density $p(x;b) = \frac{1}{2b} \exp \left(-\frac{|x|}{b}\right)$ and variance $2 b^2$.
Moreover, for $w \in \mathbb R^k$ with $w_i$ iid and $w_i \sim \Lap(b)$, 
denoted $w \sim \Lap(b)^k$, we have $p(w;b) = (\frac{1}{2b})^k \exp \left(-\frac{\|w\|_1}{b}\right)$, 
 $\Exp[\|w\|_1] = b$, and $\Prob(\|w\|_1 \geq t b) = e^{-t}$. 

\begin{thm}	\label{thm: Laplace mech}
Let $q: \D \to \mathbb R^k$ be a query.
Then the Laplace mechanism $M_q: \D \times \Omega \to \mathbb R^k$ 
defined by $M_q(d) = q(d) + w$, with $w \sim \Lap \left( b \right)^k$ 
and $b \geq \frac{\Delta_1 q}{\epsilon}$ is $\epsilon$-differentially private.
\end{thm}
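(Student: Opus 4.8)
The plan is to verify the differential privacy inequality (\ref{eq: standard def approximate DP original}) directly for the Laplace mechanism by bounding the ratio of output densities for two adjacent datasets. First I would fix adjacent $d, d' \in \D$ and observe that since $M_q(d) = q(d) + w$ with $w \sim \Lap(b)^k$, the output $M_q(d)$ has a density on $\mathbb R^k$ given by the shift $p_d(x) = (\frac{1}{2b})^k \exp(-\|x - q(d)\|_1 / b)$, and similarly for $d'$. Because both output distributions are absolutely continuous with respect to Lebesgue measure, I can reduce the measure-theoretic statement to a pointwise comparison of these densities.

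The key step is the pointwise density ratio bound. For any $x \in \mathbb R^k$, I would compute
\begin{align*}
\frac{p_d(x)}{p_{d'}(x)} = \exp\left( \frac{\|x - q(d')\|_1 - \|x - q(d)\|_1}{b} \right),
\end{align*}
and then apply the reverse triangle inequality to get $\|x - q(d')\|_1 - \|x - q(d)\|_1 \leq \|q(d) - q(d')\|_1$. By the definition of $\ell_1$ sensitivity (Definition \ref{defn: sensitivity}), $\|q(d) - q(d')\|_1 \leq \Delta_1 q$, so the exponent is at most $\Delta_1 q / b \leq \epsilon$ whenever $b \geq \Delta_1 q / \epsilon$. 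This yields $p_d(x) \leq e^{\epsilon} p_{d'}(x)$ pointwise.

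To finish, I would integrate this pointwise inequality over an arbitrary Borel set $S \in \mathcal R^k$:
\begin{align*}
\Prob(M_q(d) \in S) = \int_S p_d(x) \, dx \leq e^{\epsilon} \int_S p_{d'}(x) \, dx = e^{\epsilon} \Prob(M_q(d') \in S),
\end{align*}
which is exactly (\ref{eq: standard def approximate DP original}) with $\delta = 0$, establishing $\epsilon$-differential privacy. Since $d, d'$ were arbitrary adjacent datasets and $S$ was an arbitrary measurable set, the proof is complete.

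I do not expect any genuine obstacle here, as the argument is a short density computation; the only point requiring mild care is the reverse triangle inequality step, where one must apply it in the correct direction so that the numerator's norm difference is upper-bounded by the sensitivity rather than its negative. A secondary subtlety worth noting is that the bound on the exponent holds uniformly in $x$, which is precisely what allows the pointwise density inequality to integrate cleanly to the set-wise probability inequality without any additional $\delta$ term.
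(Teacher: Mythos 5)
Your proposal is correct and follows essentially the same argument as the paper: both compute the shifted Laplace density of $M_q(d)$, bound the density ratio via the triangle inequality $\|x-q(d')\|_1 - \|x-q(d)\|_1 \leq \|q(d)-q(d')\|_1 \leq \Delta_1 q$, and integrate over an arbitrary measurable set. The only cosmetic difference is that the paper performs the change of variables and the bound inside the integral rather than stating the pointwise density inequality first.
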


Note that the mechanism requires \emph{each} coordinate of $w$ to have 
standard deviation proportional to $\Delta_1 q$, as well as inversely proportional to the
privacy parameter $\epsilon$ (here $\delta = 0)$. For example, if $q$ simply consists of $k$ 
repetitions of the same scalar query, then $\Delta_1 q$ increases linearly with $k$, and the 
quadratically growing variance of the noise added to each coordinate prevents an adversary from 
averaging out the noise.

\begin{proof} We have, for $S \subset \mathbb R^k$ measurable 
and $d, d'$ two adjacent datasets in $\D$,
\ifthenelse {\boolean{TwoColEq}} 
{
\begin{align*}
P(M_q(d) \in S) 
&= \left(\frac{1}{2b}\right)^k \int_{\mathbb R^k} 1_S(q(d)+w) e^{-\frac{\|w\|_1}{b}} dw \\
&= \left(\frac{1}{2b}\right)^k \int_{\mathbb R^k} 1_S(u) e^{-\frac{\|u-q(d)\|_1}{b}} dw \\
&\leq e^{\frac{\|q(d)-q(d')\|_1}{b}} \left(\frac{1}{2b}\right)^k \int_{\mathbb R^k} 1_S(u) e^{-\frac{\|u-q(d')\|_1}{b}} dw,
\end{align*}
}
{
\begin{align*}
P(M_q(d) \in S) 
&= \left(\frac{1}{2b}\right)^k \int_{\mathbb R^k} 1_S(q(d)+w) e^{-\frac{\|w\|_1}{b}} dw
= \left(\frac{1}{2b}\right)^k \int_{\mathbb R^k} 1_S(u) e^{-\frac{\|u-q(d)\|_1}{b}} dw \\
&\leq e^{\frac{\|q(d)-q(d')\|_1}{b}} \left(\frac{1}{2b}\right)^k \int_{\mathbb R^k} 1_S(u) e^{-\frac{\|u-q(d')\|_1}{b}} dw,
\end{align*}
}
since $-\|u-q(d)\|_1 \leq -\|u-q(d')\|_1+\|q(d)-q(d')\|_1$ by the triangle inequality.
With the choice of $b = \Delta_1 q/\epsilon$, we obtain the definition (\ref{eq: standard def approximate DP original})
of differential privacy (i.e., with $\delta = 0$).
\end{proof}

\subsubsection{The Gaussian Mechanism}
This mechanism, proposed in \cite{Dwork06_DPgaussian}, is similar to the 
Laplace mechanism but adds i.i.d. Gaussian noise to obtain $(\epsilon,\delta)$-differential privacy, 
with $\delta > 0$ but typically a smaller $\epsilon$ for the same utility. Recall the definition of the 
$\mathcal Q$-function 
\[
\mathcal Q(x) := \frac{1}{\sqrt{2 \pi}} \int_x^{\infty} e^{-\frac{u^2}{2}} du.
\]
The following theorem tightens the analysis from \cite{Dwork06_DPgaussian}.

\begin{thm}	\label{thm: Gaussian mech}
Let $q: \D \to \mathbb R^k$ be a query.
Then the Gaussian mechanism $M_q: \D \times \Omega \to \mathbb R^k$ 
defined by $M_q(d) = q(d) + w$, with $w \sim \mathcal N\left(0,\sigma^2 I_k \right)$, 
where $\sigma \geq \frac{\Delta_2 q}{2 \epsilon}(K + \sqrt{K^2+2\epsilon})$ and $K = \mathcal Q^{-1}(\delta)$,
is $(\epsilon,\delta)$-differentially private.
\end{thm}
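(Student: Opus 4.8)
The plan is to reduce the $k$-dimensional Gaussian mechanism to a one-dimensional problem along the direction in which the two query answers differ, then perform a tail bound on the log-likelihood ratio of the two shifted Gaussians. The key observation is that for $w \sim \mathcal N(0, \sigma^2 I_k)$, the density ratio between the distributions of $M_q(d) = q(d) + w$ and $M_q(d') = q(d') + w$ at a point $x$ depends only on the component of $x - q(d')$ along the vector $v := q(d) - q(d')$. Writing $\|v\|_2 =: \Delta \leq \Delta_2 q$, this ratio is $\exp\left(\frac{1}{\sigma^2}\left(\langle x - q(d'), v\rangle - \frac{1}{2}\|v\|_2^2\right)\right)$, so the log-likelihood ratio $L := \ln\frac{p_{M_q(d)}(x)}{p_{M_q(d')}(x)}$, viewed as a random variable under $x \sim M_q(d)$, is Gaussian.

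\emph{First} I would compute the distribution of $L$ under the law of $M_q(d)$. Projecting onto the unit vector $v/\|v\|_2$, the relevant scalar is $z := \langle w, v/\|v\|_2\rangle \sim \mathcal N(0,\sigma^2)$, and a direct calculation gives $L = \frac{\Delta}{\sigma^2} z + \frac{\Delta^2}{2\sigma^2}$, so that $L \sim \mathcal N\!\left(\frac{\Delta^2}{2\sigma^2}, \frac{\Delta^2}{\sigma^2}\right)$ under $M_q(d)$. \emph{Next}, the standard route to $(\epsilon,\delta)$-privacy is to show $\Prob\big(L > \epsilon\big) \leq \delta$ and then split any measurable set $S$ into the region where the density ratio is at most $e^\epsilon$ (contributing the $e^\epsilon$ factor) and its complement (contributing at most the mass $\delta$). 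Concretely, for $S \in \mathcal R^k$,
\begin{align*}
\Prob(M_q(d) \in S) &= \Prob(M_q(d) \in S,\ L \leq \epsilon) + \Prob(M_q(d) \in S,\ L > \epsilon) \\
&\leq e^\epsilon\, \Prob(M_q(d') \in S) + \Prob(L > \epsilon),
\end{align*}
so it suffices to force $\Prob(L > \epsilon) \leq \delta$.

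\emph{The main obstacle} is getting the constant in the tail bound sharp enough to match the stated threshold, since the theorem advertises tightening the bound from \cite{Dwork06_DPgaussian}. Using the $\mathcal Q$-function, $\Prob(L > \epsilon) = \mathcal Q\!\left(\frac{\epsilon - \Delta^2/(2\sigma^2)}{\Delta/\sigma}\right) = \mathcal Q\!\left(\frac{\epsilon\sigma}{\Delta} - \frac{\Delta}{2\sigma}\right)$. Requiring this to be at most $\delta = \mathcal Q(K)$ and using that $\mathcal Q$ is decreasing, the condition becomes $\frac{\epsilon\sigma}{\Delta} - \frac{\Delta}{2\sigma} \geq K$, i.e. $\epsilon\sigma^2 - K \Delta \sigma - \frac{\Delta^2}{2} \geq 0$ after clearing $\sigma/\Delta$. \emph{Finally} I would solve this quadratic in $\sigma$: the positive root is $\sigma = \frac{\Delta}{2\epsilon}\big(K + \sqrt{K^2 + 2\epsilon}\big)$, which is exactly the stated threshold (with $\Delta \leq \Delta_2 q$ making larger $\sigma$ only safer since $\mathcal Q$ is monotone in $\sigma$). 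The care required is to verify monotonicity in $\sigma$ of the expression $\frac{\epsilon\sigma}{\Delta} - \frac{\Delta}{2\sigma}$ so that the threshold value of $\sigma$ is a genuine lower bound rather than an isolated solution, and to confirm that replacing the realized gap $\Delta$ by its worst case $\Delta_2 q$ preserves the inequality in the correct direction.
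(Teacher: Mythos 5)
Your proposal is correct and follows essentially the same route as the paper's proof: the event $\{L>\epsilon\}$ in your decomposition is exactly the paper's indicator set $\{2(u-q(d'))^Tv \geq \|v\|^2 + 2\epsilon\sigma^2\}$, and both arguments reduce to the same one-dimensional tail condition $\mathcal Q(\epsilon\sigma/\Delta - \Delta/(2\sigma)) \leq \delta$ and the same quadratic in $\sigma$. Your explicit checks of monotonicity in $\sigma$ and of the worst case $\Delta = \Delta_2 q$ are the "straightforward calculation" the paper leaves implicit.
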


\begin{proof}
Let $d, d'$ be two adjacent elements in $\D$, and denote $v := q(d)-q(d')$.
We use the notation $\|\cdot\|$ for the $2$-norm in this proof.
For $S \in \mathcal R^k$, we have
\ifthenelse {\boolean{TwoColEq}} 
{
\begin{align*}
&\Prob(M_q(d) \in S) \\
&= \frac{1}{(2 \pi \sigma^2)^{k/2}} \int_{\mathbb R^k} 1_S(q(d)+w) e^{-\frac{\|w\|^2}{2 \sigma^2}} dw \\
&= \frac{1}{(2 \pi \sigma^2)^{k/2}} \int_{\mathbb R^k} 1_S(u) e^{-\frac{\|u-q(d)\|^2}{2 \sigma^2}} du \\
&= \frac{1}{(2 \pi \sigma^2)^{k/2}} \int_S e^{-\frac{\|u-q(d')\|^2}{2 \sigma^2}} 
e^{\frac{2(u-q(d'))^T v-\|v\|^2}{2 \sigma^2}} du \\
&\leq e^{\epsilon} \Prob(M_q(d') \in S) 
+ 
\frac{1}{(2 \pi \sigma^2)^{k/2}} \int_S \bigg [ e^{-\frac{\|u-q(d)\|^2}{2 \sigma^2}} \\
& \quad \quad 1\{ 2(u-q(d'))^T v  \geq \|v\|^2 + 2 \epsilon \sigma^2 \} \bigg ] du.
\end{align*}
}
{
\begin{align*}
&\Prob(M_q(d) \in S) = \frac{1}{(2 \pi \sigma^2)^{k/2}} \int_{\mathbb R^k} 1_S(q(d)+w) e^{-\frac{\|w\|^2}{2 \sigma^2}} dw 
= \frac{1}{(2 \pi \sigma^2)^{k/2}} \int_{\mathbb R^k} 1_S(u) e^{-\frac{\|u-q(d)\|^2}{2 \sigma^2}} du \\
&= \frac{1}{(2 \pi \sigma^2)^{k/2}} \int_S e^{-\frac{\|u-q(d')\|^2}{2 \sigma^2}} 
e^{\frac{2(u-q(d'))^T v-\|v\|^2}{2 \sigma^2}} du \\
&\leq e^{\epsilon} \Prob(M_q(d') \in S) + 
\frac{1}{(2 \pi \sigma^2)^{k/2}} \int_S \bigg [ e^{-\frac{\|u-q(d)\|^2}{2 \sigma^2}} 
1\Big \{ 2(u-q(d'))^T v  \geq \|v\|^2 + 2 \epsilon \sigma^2 \Big \} \bigg ] du.
\end{align*}
}
The last integral term defines a measure $S \mapsto \mu^{d,d'}(S)$ on $\mathbb R^k$
that we wish to bound by $\delta$.
%
With the change of variables $y=(u-q(d))/\sigma$ and the choice $S = \mathbb R^k$ in the integral, 
we can rewrite it as
$
\Prob(Y^T v \geq \epsilon \sigma - \|v\|^2/2\sigma ),
$
with $Y \sim \mathcal N(0,I_k)$. In particular, $Y^T v \sim \mathcal N(0,\|v\|^2)$, 
hence is equal to $\|v\| Z$ in distribution, with $Z \sim \mathcal N(0,1)$.
We are then led to set $\sigma$ sufficiently large so that
$\Prob(Z \geq \epsilon \sigma/\|v\| - \|v\|/2\sigma) \leq \delta$, i.e., 
$\mathcal Q(\epsilon \sigma/\|v\| - \|v\|/2\sigma) \leq \delta$.
The result then follows by straightforward calculation.
\end{proof}

As an illustration of the theorem, to guarantee $(\epsilon,\delta)$-differential privacy with $\epsilon = \ln 2$ and 
$\delta = 0.05$, the standard deviation of the Gaussian noise should be about $2.65$ times the $\ell_2$ sensitivity 
of $q$. For the rest of the paper, we define 
$
\kappa(\delta,\epsilon) = \frac{1}{2 \epsilon} (K+\sqrt{K^2+2\epsilon}),
$
so that the standard deviation $\sigma$ in Theorem \ref{thm: Gaussian mech}
can be written $\sigma(\delta,\epsilon) = \kappa(\epsilon,\delta) \Delta_2 q$.
It can be shown that $\kappa(\delta,\epsilon)$ can be bounded by $O(\ln(1/\delta))^{1/2}/\epsilon$.


\section{Differentially Private Dynamic Systems}		\label{section: DP linear systems}

In this section we introduce the notion of differential privacy for dynamic systems. We start with some 
notations and technical prerequisites. All signals are discrete-time signals, start at time $0$, and all 
systems are assumed to be causal. For each time $T$, let $P_T$ be the truncation operator, so that 
for any signal $x$ we have
\[
(P_T x)_t = \begin{cases}
x_t, & t \leq T \\
0, & t > T.
\end{cases}
\]
Hence a deterministic system $\mathcal G$ is causal if and only if $P_T \mathcal G = P_T \mathcal G P_T$.
We denote by $\ell_{p,e}^m$ the space of sequences with values in
$\mathbb R^m$ and such that $x \in \ell_{p,e}^m$ if and only if $P_T x$
has finite $p$-norm for all integers $T$. The $\mathcal H_2$ norm and $\mathcal H_\infty$
norm of a stable transfer function $\mathcal G$ are defined respectively as
$
\|\mathcal G\|_2 = \left(\frac{1}{2 \pi} \int_{-\pi}^\pi \Tr (\mathcal G^*(e^{i \omega}) \mathcal G(e^{i \omega})) d \omega\right)^{1/2}, 
\|\mathcal G\|_\infty = \text{ess} \sup_{\omega \in [-\pi,\pi)} \sigma_{\max} (\mathcal G(e^{i \omega})),
$
where $\sigma_{\max}(A)$ denotes the maximum singular value of a matrix $A$.

We consider situations in which private participants contribute input signals 
driving a dynamic system and the queries consist of output signals 
of this system. 
First, in this section, we assume that the input of a system consists of $n$ signals, one
for each participant. An input signal is denoted
$u=(u_1,\ldots,u_n)$, with $u_{i} \in \ell_{r_i,e}^{m_i}$ for some $m_i \in \mathbb N$ and
$r_i \in [1,\infty]$. 
A simple example is that of a dynamic system releasing at each period
the average over the past $l$ periods of the sum of the input values 
of the participants, i.e., with output $\frac{1}{l} \sum_{k=t-l+1}^t \sum_{i=1}^n u_{i,k}$ at time $t$,
see Fig. \ref{fig: MA example}.
For $r=(r_1,\ldots,r_n)$ and $m=(m_1,\ldots,m_n)$, an adjacency relation can be defined
on $l_{r,e}^m = \ell_{r_1,e}^{m_1} \times \ldots \times \ell_{r_n,e}^{m_n}$ for example by
$\Adj(u,u')$ if and only if $u$ and $u'$ differ by exactly one component signal, and moreover
this deviation is bounded. That is, let us fix a set of nonnegative numbers 
$b=(b_1, \ldots, b_n)$, $b_i \geq 0$, and define 
\ifthenelse {\boolean{TwoColEq}} 
{
\begin{align}	\label{def: adjacency for vector signals}
\Adj^b(u,u') \text{ iff for some } i, \|u_i - u_i'\|_{r_i} \leq b_i, \\
\text{and } u_j = u_j' \text{ for all } j \neq i. \nonumber
\end{align}
}
{
\begin{align}	\label{def: adjacency for vector signals}
\Adj^b(u,u') \text{ iff for some } i, \|u_i - u_i'\|_{r_i} \leq b_i, 
\text{ and } u_j = u_j' \text{ for all } j \neq i. 
\end{align}
}

\begin{figure}
\centering
\includegraphics[height=3cm]{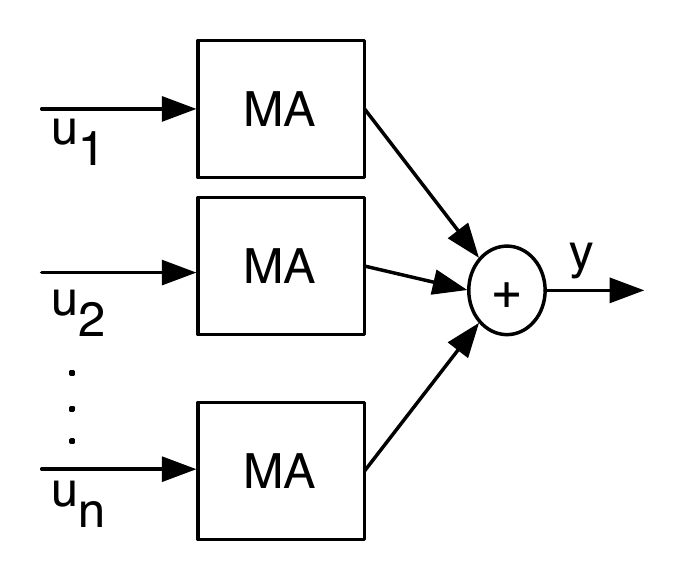}
\caption{Illustrative example of a system computing the sum of the moving averages (MA) of input signals
contributed by $n$ individual participants. A differentially private version of this system, for the adjacency relation
(\ref{def: adjacency for vector signals}), will guarantee to user $i$ that the distribution of the output signal does not
vary significantly when her input varies in $r_i$-norm by at most $b_i$. In particular, the distribution of the output signal
will not change significantly if user $i$'s input is zero ($u_i \equiv 0$, e.g., because the user is not present), or is 
not zero but satisfies $\|u_i\|_{r_i} \leq b_i$.
}
\label{fig: MA example}
\end{figure}

\subsection{Finite-Time Criterion for Differential Privacy}

To approximate dynamic systems by versions respecting the differential privacy of the individual participants, 
we consider mechanisms of the form $M: \ell_{r,e}^{m} \times \Omega \to  \ell_{s,e}^{m'}$, i.e., producing
for any input signal $u \in \ell_{r,e}^{m}$ a stochastic process $Mu$ with sample paths in $\ell_{s,e}^{m'}$. 
As in the previous section, this requires that we first specify the measurable sets of $\ell_{s,e}^{m'}$. 
We start by defining in a standard way the measurable sets of $(\mathbb R^{m'})^\mathbb N$,
the space of sequences with values in $\mathbb R^{m'}$, to be the $\sigma$-algebra
denoted $\mathcal M^{m'}$ generated by the so-called finite-dimensional cylinder sets of the form
$
\{y \in (\mathbb R^{m'})^\mathbb N: y_{0:T} \in H_T\}, \text{for } T \geq 0$ and $H_T \in \mathcal R^{(T+1) m'},
$
where $y_{0:T}$ denotes the vector $[y_0^T,\ldots,y_T^T]^T$ (see, e.g., \cite[chapter 2]{Breiman92_book}).
The measurable sets considered for the output of $M$ are then obtained by intersection of 
$\ell_{s,e}^{m'}$ with the sets of $\mathcal M^{m'}$. The resulting $\sigma$-algebra is denoted 
$\mathcal M_{s,e}^{m'}$ and is generated by the sets of the form
\begin{align}	\label{eq: finite-dimensional meas. sets}
\tilde H_T = \{y \in \ell_{s,e}^{m'}: y_{0:T} \in H_T\}, \text{ for } T \geq 0, H_T \in \mathcal R^{(T+1) m'}.
\end{align}

As for the dynamic systems of interest, we constrain in this paper the mechanisms to be 
causal, i.e., the distribution of $P_T M u$ should be the same as that of $P_T M P_T u$
for any $u \in \ell_{r,e}^{m}$ and any time $T$. In other words, the values $u_t$ for $t > T$ 
do not influence the values of the mechanism output up to time $T$.
The following technical lemma is useful to show that a mechanism on signal spaces 
is $(\epsilon,\delta)$-differentially private by considering only finite dimensional problems. 

\begin{lem}	\label{lem: technical FTDP condition}
Consider an adjacency relation $\text{Adj}$ on $\ell_{r,e}^{m}$.
For a mechanism $M: \ell_{r,e}^{m} \times \Omega \to  \ell_{s,e}^{m'}$, the following are equivalent 
\begin{enumerate}[(a)]	
\item $M$ is $(\epsilon,\delta)$-differentially private.		\label{FTDP - enum 1}
\item \label{FTDP - enum 2}
For all $u, u'$ in $\ell_{r,e}^{m}$ such that $\text{Adj}(u,u')$, we have
\begin{align}	\label{eq: finite-time DP}
\Prob((Mu)_{0:T} \in A) \leq e^{\epsilon} \, \Prob((Mu')_{0:T} \in A) + \delta, 
\;\; \forall T \geq 0, \forall A \in \mathcal R^{(T+1) m'}.
\end{align}
\end{enumerate}
\end{lem}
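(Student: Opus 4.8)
The plan is to prove the two implications separately, with the forward direction (\ref{FTDP - enum 1}) $\Rightarrow$ (\ref{FTDP - enum 2}) being immediate and the converse carrying all the measure-theoretic weight. For (\ref{FTDP - enum 1}) $\Rightarrow$ (\ref{FTDP - enum 2}), I would simply specialize the privacy inequality (\ref{eq: standard def approximate DP original}) to the finite-dimensional cylinder sets $\tilde H_T = \{y \in \ell_{s,e}^{m'}: y_{0:T} \in A\}$ of the form (\ref{eq: finite-dimensional meas. sets}). Since $\{(Mu)_{0:T} \in A\} = \{Mu \in \tilde H_T\}$ as events, the definition of $(\epsilon,\delta)$-differential privacy applied to the set $\tilde H_T$ yields (\ref{eq: finite-time DP}) at once.

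For the converse, I would first note that the cylinder sets do more than generate $\mathcal M_{s,e}^{m'}$: collected over all $T$ they form an \emph{algebra} $\mathcal A$. Indeed, for fixed $T$ they constitute the pullback $\sigma$-algebra $\mathcal F_T$ of the projection $y \mapsto y_{0:T}$; the $\mathcal F_T$ are nested and increasing in $T$; and an increasing union $\mathcal A = \bigcup_T \mathcal F_T$ of $\sigma$-algebras is an algebra with $\sigma(\mathcal A) = \mathcal M_{s,e}^{m'}$. Fixing adjacent $u,u'$, I would then introduce the finite signed measure $\nu := \Prob(Mu \in \cdot) - e^{\epsilon} \Prob(Mu' \in \cdot)$ on $\mathcal M_{s,e}^{m'}$ and restate goal (\ref{FTDP - enum 1}) as $\sup_{S \in \mathcal M_{s,e}^{m'}} \nu(S) \leq \delta$. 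Hypothesis (\ref{FTDP - enum 2}) gives exactly $\nu(A) \leq \delta$ for every $A \in \mathcal A$, so the task reduces to transporting this bound from the generating algebra to the full $\sigma$-algebra.

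The crux is that transport step. The natural temptation, a Dynkin $\pi$--$\lambda$ argument, fails here: the family of sets satisfying the privacy inequality is not a $\lambda$-system, because the multiplicative factor $e^{\epsilon}$ and the additive slack $\delta$ destroy closure under complementation. Instead I would argue through the Jordan--Hahn decomposition of $\nu$. Let $P^*$ be a positive set for $\nu$, so that $\sup_S \nu(S) = \nu(P^*) = \nu^+(\ell_{s,e}^{m'})$. Since $P$ and $e^\epsilon P'$ are finite, the total variation $|\nu| = \nu^+ + \nu^-$ is a finite measure on $\sigma(\mathcal A)$, so the standard approximation theorem for generated $\sigma$-algebras provides, for every $\eta > 0$, a set $A \in \mathcal A$ with $|\nu|(P^* \,\triangle\, A) < \eta$. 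Then $|\nu(P^*) - \nu(A)| \leq |\nu|(P^* \,\triangle\, A) < \eta$, and combined with $\nu(A) \leq \delta$ this yields $\nu(P^*) < \delta + \eta$. Letting $\eta \to 0$ gives $\sup_S \nu(S) = \nu(P^*) \leq \delta$, which is precisely (\ref{eq: standard def approximate DP original}) for the law of $Mu$ relative to $Mu'$ on $(\ell_{s,e}^{m'}, \mathcal M_{s,e}^{m'})$, establishing (\ref{FTDP - enum 1}).

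I expect the main obstacle to be recognizing that the $\pi$--$\lambda$ machinery is inapplicable and replacing it with the signed-measure and Hahn-decomposition route. Once that reformulation is in place, the only nontrivial ingredient is the measure-approximation theorem, which is standard and applies precisely because $\nu$, and hence $|\nu|$, is finite. Note that causality of $M$ plays no role in this lemma, which is purely a statement about the cylinder-set structure of $\mathcal M_{s,e}^{m'}$.
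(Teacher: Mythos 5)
Your proposal is correct and follows essentially the same route as the paper: the forward implication is the identical restriction to cylinder sets, and the converse rests on exactly the two ingredients the paper invokes, namely that the finite-dimensional cylinder sets form an algebra generating $\mathcal M_{s,e}^{m'}$ and that sets of the generated $\sigma$-algebra can be approximated, with respect to a finite measure, by sets from the generating algebra. The paper simply applies this approximation to an arbitrary $S$ and chains the resulting inequalities for the two probability laws directly, whereas you repackage the same step through the signed measure $\nu$ and its Hahn decomposition---a cosmetic rather than substantive difference.
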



\begin{proof}
$\ref{FTDP - enum 1}) \Rightarrow \ref{FTDP - enum 2})$ 
If $M$ is $(\epsilon,\delta)$-differentially private, then for $u,u'$ adjacent, and for all $H \in \mathcal M_{s,e}^{m'}$, 
we have $\Prob(Mu \in H) \leq e^{\epsilon} \; \Prob(Mu' \in H) + \delta$. In particular, for a given integer $T \geq 0$, 
we can restrict our attention to the sets $\tilde H_T$ of the form (\ref{eq: finite-dimensional meas. sets}).
In this case, we have immediately $\Prob(Mu \in \tilde H_T) = \Prob((Mu)_{0:T} \in H_T)$ since the events
are the same.

$\ref{FTDP - enum 2}) \Rightarrow \ref{FTDP - enum 1})$ 
Conversely, consider two adjacent signal $u,u' \in \ell_{r,e}^{m}$, and let $S \in \mathcal M^{m'}_{s,e}$, for 
which we want to show (\ref{eq: standard def approximate DP original}). Fix $\eta > 0$. There exists $T \geq 0$ 
and $H_T \in \mathcal R^{(T+1) m'}$ such that $\Prob(Mu \in S \Delta \tilde H_T) \leq  \eta$ 
and $\Prob(Mu' \in S \Delta \tilde H_T) \leq \eta$, where $A \Delta B := (A \setminus B) \cup (B \setminus A)$ denotes the 
symmetric difference. This is a consequence for example of the fact that the finite-dimensional cylinder sets form an algebra
and of the argument in the proof of \cite[Theorem 3.1.10]{Dudley02_book}. We then have
\ifthenelse {\boolean{TwoColEq}} 
{
\begin{align*}
\mathbb P(Mu \in A) &\leq \mathbb P(Mu \in \tilde H_T) + \eta \\
&= \mathbb P((Mu)_{0:T} \in H_T) + \eta \\
&\leq e^{\epsilon} \, \mathbb P((Mu')_{0:T} \in H_T) + \mu^{u,u'}_T(H_T) + \eta \\
&=e^{\epsilon} \, \mathbb P(Mu' \in \tilde H_T) + \mu^{u,u'}(\tilde H_T) + \eta \\
&\leq e^{\epsilon} \, \mathbb P(Mu' \in A) + \mu(A) + \eta (2 + e^\epsilon). 
\end{align*}
}
{
\begin{align*}
\mathbb P(Mu \in S) &\leq \mathbb P(Mu \in \tilde H_T) + \eta 
= \mathbb P((Mu)_{0:T} \in H_T) + \eta \\
&\leq e^{\epsilon} \, \mathbb P((Mu')_{0:T} \in H_T) + \delta + \eta 
=e^{\epsilon} \, \mathbb P(Mu' \in \tilde H_T) + \delta + \eta \\
&\leq e^{\epsilon} \, \mathbb P(Mu' \in S) + \delta + \eta (1 + e^\epsilon). 
\end{align*}
}
Since $\eta$ can be taken arbitrarily small, the differential privacy definition
(\ref{eq: standard def approximate DP original}) holds.
\end{proof}

\subsection{Basic Dynamic Mechanisms}

Recall (see, e.g., \cite{VanderSchaft00_passivity}) that for a system $\mathcal G$ with inputs in $\ell_{r,e}^{m}$ 
and output in $\ell_{s,e}^{m'}$, its $\ell_{r}$-to-$\ell_{s}$ incremental gain $\gamma^{inc}_{r,s}(\mathcal G)$ is defined 
as the smallest number $\gamma$ such that
\[
\| P_T \mathcal G u- P_T \mathcal G u' \|_s \leq \gamma \| P_T u - P_T u' \|_r, \;\; \forall u, u' \in \ell_{r,e}^m, \; \forall T.
\]
Now consider, for $r=(r_1,\ldots,r_n)$ and $m=(m_1,\ldots,m_n)$, a system $\mathcal G: l^m_{r,e} \to \ell_{s,e}^{m'}$ defined by
\begin{align}
\mathcal G (u_1,\ldots,u_n) = \sum_{i=1}^n \mathcal G_i u_i,	\label{eq: general system considered}
\end{align}
where $\mathcal G_i: \ell_{r_i,e}^{m_i} \to \ell_{s,e}^{m'}$, for all $1 \leq i \leq n$. The next theorem generalizes the Laplace and
Gaussian mechanisms of Theorems \ref{thm: Laplace mech} and \ref{thm: Gaussian mech} to causal dynamic systems.

\begin{thm}	\label{thm: differentially private large scale system}
Let $\mathcal G$ be defined as in (\ref{eq: general system considered}) and
consider the adjacency relation (\ref{def: adjacency for vector signals}).
Then the mechanism $Mu = \mathcal Gu+w$, where $w$ is a white noise with $w_t \sim \Lap(B/\epsilon)^{m'}$ and
$B \geq \max_{1 \leq i \leq n} \{ \gamma^{inc}_{r_i,1}(\mathcal G_i) \, b_i \}$, is $\epsilon$-differentially private.
The mechanism is $(\epsilon,\delta)$-differentially private
if $w_t \sim \mathcal N(0,\sigma^2 I_{m'})$, 
with $\sigma \geq \kappa(\delta,\epsilon) \max_{1 \leq i \leq n} \{ \gamma^{inc}_{r_i,2}(\mathcal G_i) \, b_i \}$.
\end{thm}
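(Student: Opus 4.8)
The plan is to reduce the infinite-horizon privacy claim to a family of finite-dimensional problems via Lemma~\ref{lem: technical FTDP condition}, and then invoke the static mechanism theorems (Theorems~\ref{thm: Laplace mech} and~\ref{thm: Gaussian mech}) on each truncated problem. Concretely, for a fixed horizon $T \ge 0$ I would regard the deterministic map $q_T: \ell_{r,e}^m \to \mathbb R^{(T+1)m'}$, $q_T(u) := (\mathcal G u)_{0:T}$, as a static vector-valued query in the sense of Definition~\ref{defn: sensitivity}, with dataset space $\ell_{r,e}^m$ equipped with the adjacency (\ref{def: adjacency for vector signals}). Because the injected process $w$ is white with each $w_t$ having i.i.d.\ coordinates, the stacked vector $w_{0:T}$ consists of exactly $(T+1)m'$ i.i.d.\ Laplace (resp.\ Gaussian) entries, so the truncated output satisfies $(Mu)_{0:T} = q_T(u) + w_{0:T}$ and is precisely a static Laplace (resp.\ Gaussian) mechanism applied to $q_T$. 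Verifying inequality (\ref{eq: finite-time DP}) for every $T$ then reduces to checking that the fixed noise level dominates the relevant sensitivity of $q_T$.

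The heart of the argument is a bound on $\Delta_s q_T$ that is \emph{uniform in $T$}. For two adjacent inputs $u,u'$ in the sense of (\ref{def: adjacency for vector signals}), only component $i$ differs, so the additive structure (\ref{eq: general system considered}) collapses the difference onto a single channel: $q_T(u) - q_T(u') = P_T \mathcal G_i u_i - P_T \mathcal G_i u_i'$, viewed as a vector of $\mathbb R^{(T+1)m'}$. Its $\ell_s$-norm equals the truncated signal norm $\|P_T(\mathcal G_i u_i - \mathcal G_i u_i')\|_s$, and applying the definition of the incremental gain together with the fact that truncation is non-expansive gives $\|q_T(u) - q_T(u')\|_s \le \gamma^{inc}_{r_i,s}(\mathcal G_i)\,\|P_T(u_i - u_i')\|_{r_i} \le \gamma^{inc}_{r_i,s}(\mathcal G_i)\, b_i$. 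Maximizing over the single offending index $i$ yields $\Delta_s q_T \le \max_{1 \le i \le n} \gamma^{inc}_{r_i,s}(\mathcal G_i)\, b_i$, which does not depend on $T$.

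Specializing to $s=1$, this sensitivity is at most $B$, so the scale $B/\epsilon$ satisfies $B/\epsilon \ge \Delta_1 q_T/\epsilon$ and Theorem~\ref{thm: Laplace mech} yields $\epsilon$-differential privacy of the finite-time mechanism; specializing to $s=2$, the chosen $\sigma \ge \kappa(\delta,\epsilon)\max_i \gamma^{inc}_{r_i,2}(\mathcal G_i) b_i \ge \kappa(\delta,\epsilon)\Delta_2 q_T$, so Theorem~\ref{thm: Gaussian mech} yields $(\epsilon,\delta)$-differential privacy. In either case the static guarantee is exactly (\ref{eq: finite-time DP}) for that $T$, and since it holds for all $T$, Lemma~\ref{lem: technical FTDP condition} upgrades it to differential privacy of $M$ on the full signal space. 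The one point requiring genuine care — and the reason the theorem succeeds with a \emph{single}, horizon-independent noise level — is precisely the uniformity of the sensitivity bound: the incremental gain $\gamma^{inc}_{r_i,s}(\mathcal G_i)$ is defined as a supremum over all truncation times, which is exactly what prevents $\Delta_s q_T$ from growing with $T$ and forcing the noise variance to blow up. I expect the remaining details (the change of variables in the static proofs and the elementary non-expansiveness of $P_T$) to be routine.
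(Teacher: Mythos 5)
Your proposal is correct and follows essentially the same route as the paper's proof: bound the $\ell_1$ and $\ell_2$ sensitivity of the truncated system $P_T\mathcal G$ uniformly in $T$ via the incremental gains of the single differing channel, then apply Theorems~\ref{thm: Laplace mech} and~\ref{thm: Gaussian mech} together with Lemma~\ref{lem: technical FTDP condition}. Your additional remarks on the i.i.d.\ structure of the stacked noise vector and the non-expansiveness of $P_T$ simply make explicit details the paper leaves implicit.
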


\begin{proof}
Consider two adjacent signals $u, u'$, differing say in their $i^{\text{th}}$ component.
Then, for $\alpha \in \{1,2\}$, we have
\ifthenelse {\boolean{TwoColEq}} 
{
\begin{align*}
\|P_T \mathcal G u - P_T \mathcal G u'\|_\alpha &= \|P_T \mathcal G_i u_i - P_T \mathcal G_i u_i'\|_\alpha \\
& \leq \gamma_{r_i,\alpha} \|P_T u_i - P_T u_i'\|_{r_i} \\
& \leq \gamma_{r_i,\alpha} \|u_i - u_i'\|_{r_i} \\
& \leq \gamma_{r_i,\alpha} b_i.
\end{align*}
}
{
\begin{align*}
\|P_T \mathcal G u - P_T \mathcal G u'\|_\alpha &= \|P_T \mathcal G_i u_i - P_T \mathcal G_i u_i'\|_\alpha 
\leq \gamma_{r_i,\alpha} \|P_T u_i - P_T u_i'\|_{r_i} \\
& \leq \gamma_{r_i,\alpha} \|u_i - u_i'\|_{r_i} 
\leq \gamma_{r_i,\alpha} b_i.
\end{align*}
}
This leads to a bound on the $\ell_1$ and $\ell_2$ sensitivity of $P_T \mathcal G$, valid for all $T$.
The result is then an application of Theorems \ref{thm: Laplace mech} and \ref{thm: Gaussian mech}
and Lemma \ref{lem: technical FTDP condition}, since (\ref{eq: finite-time DP}) is satisfied for all
$T$.
\end{proof}

 \begin{cor}	\label{cor: differentially private large scale system - linear}
 Let $\mathcal G$ be defined as in (\ref{eq: general system considered}) with each
 system $\mathcal G_i$ linear, and $r_i = 2$ for all $1 \leq i \leq n$.
Then the mechanism $Mu = \mathcal Gu+w$, where $w$ is a white Gaussian noise 
with $w_t \sim \mathcal N(0,\sigma^2 I_{m'})$ and
$\sigma \geq \kappa(\delta,\epsilon) \max_{1 \leq i \leq n} \{ \|\mathcal G_i\|_\infty \, b_i \}$,
is $(\epsilon,\delta)$-differentially private for (\ref{def: adjacency for vector signals}).
\end{cor}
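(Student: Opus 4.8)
The plan is to reduce the corollary to Theorem \ref{thm: differentially private large scale system} by showing that, for each stable linear subsystem $\mathcal G_i$, the relevant incremental gain is controlled by the $\mathcal H_\infty$ norm, namely $\gamma^{inc}_{2,2}(\mathcal G_i) \leq \|\mathcal G_i\|_\infty$. Since the adjacency relation (\ref{def: adjacency for vector signals}) is used with $r_i = 2$ for all $i$, Theorem \ref{thm: differentially private large scale system} already guarantees $(\epsilon,\delta)$-differential privacy as soon as $\sigma \geq \kappa(\delta,\epsilon)\max_{1\le i \le n}\{\gamma^{inc}_{2,2}(\mathcal G_i)\,b_i\}$. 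Once the bound $\gamma^{inc}_{2,2}(\mathcal G_i)\leq\|\mathcal G_i\|_\infty$ is in hand, the choice $\sigma \geq \kappa(\delta,\epsilon)\max_i\{\|\mathcal G_i\|_\infty\,b_i\}$ dominates the requirement of the theorem, and the conclusion follows immediately. (In fact equality holds, but only the inequality is needed here.)

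First I would use linearity to remove the incremental aspect. For $\mathcal G_i$ linear, $\mathcal G_i u_i - \mathcal G_i u_i' = \mathcal G_i(u_i - u_i')$, so writing $v := u_i - u_i'$ we get $\|P_T \mathcal G_i u_i - P_T \mathcal G_i u_i'\|_2 = \|P_T \mathcal G_i v\|_2$, and $\gamma^{inc}_{2,2}(\mathcal G_i)$ becomes the smallest $\gamma$ with $\|P_T \mathcal G_i v\|_2 \leq \gamma \|P_T v\|_2$ for all $v \in \ell_{2,e}^{m_i}$ and all $T$, i.e.\ the induced $\ell_2$-gain of the convolution operator.

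Next I would bound this induced gain by $\|\mathcal G_i\|_\infty$ through a standard frequency-domain argument. The key steps are: (i) invoke causality, $P_T \mathcal G_i = P_T \mathcal G_i P_T$, to write $\|P_T \mathcal G_i v\|_2 = \|P_T \mathcal G_i P_T v\|_2 \leq \|\mathcal G_i P_T v\|_2$, which is legitimate because $P_T v$ has finite support and hence lies in $\ell_2$; (ii) apply Parseval's theorem, so that $\|\mathcal G_i P_T v\|_2^2 = \frac{1}{2\pi}\int_{-\pi}^\pi \|\mathcal G_i(e^{i\omega})\,\widehat{P_T v}(\omega)\|^2\, d\omega$; and (iii) bound the integrand pointwise by $\sigma_{\max}(\mathcal G_i(e^{i\omega}))^2\,\|\widehat{P_T v}(\omega)\|^2 \leq \|\mathcal G_i\|_\infty^2\,\|\widehat{P_T v}(\omega)\|^2$, using the definition of $\|\mathcal G_i\|_\infty$ as the essential supremum over $\omega$ of $\sigma_{\max}(\mathcal G_i(e^{i\omega}))$. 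A final use of Parseval yields $\|P_T \mathcal G_i v\|_2 \leq \|\mathcal G_i\|_\infty \|P_T v\|_2$ uniformly in $T$, which is exactly $\gamma^{inc}_{2,2}(\mathcal G_i) \leq \|\mathcal G_i\|_\infty$.

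I expect the main obstacle to be the rigorous handling of steps (i)–(iii): one must use stability of $\mathcal G_i$ to ensure the convolution operator is bounded on $\ell_2$, confirm that Parseval applies to the finitely supported signal $P_T v$, and check that the essential-supremum bound transfers cleanly back to the time domain. Everything else is bookkeeping: substituting $\gamma^{inc}_{2,2}(\mathcal G_i) \leq \|\mathcal G_i\|_\infty$ into the Gaussian case of Theorem \ref{thm: differentially private large scale system} completes the proof.
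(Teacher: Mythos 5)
Your proposal is correct and follows exactly the route the paper intends: the corollary is stated as an immediate consequence of Theorem \ref{thm: differentially private large scale system}, using the standard fact that for a stable linear system the induced $\ell_2$-to-$\ell_2$ (incremental) gain is bounded by (indeed equals) the $\mathcal H_\infty$ norm. Your Parseval-based argument for $\gamma^{inc}_{2,2}(\mathcal G_i) \leq \|\mathcal G_i\|_\infty$ simply fills in the details the paper leaves implicit, and the substitution into the Gaussian case of the theorem is exactly right.
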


\subsection{Filter Approximation Set-ups for Differential Privacy}		\label{section: approximation set-ups}

\begin{figure}
\centering
\includegraphics[height=2.75cm]{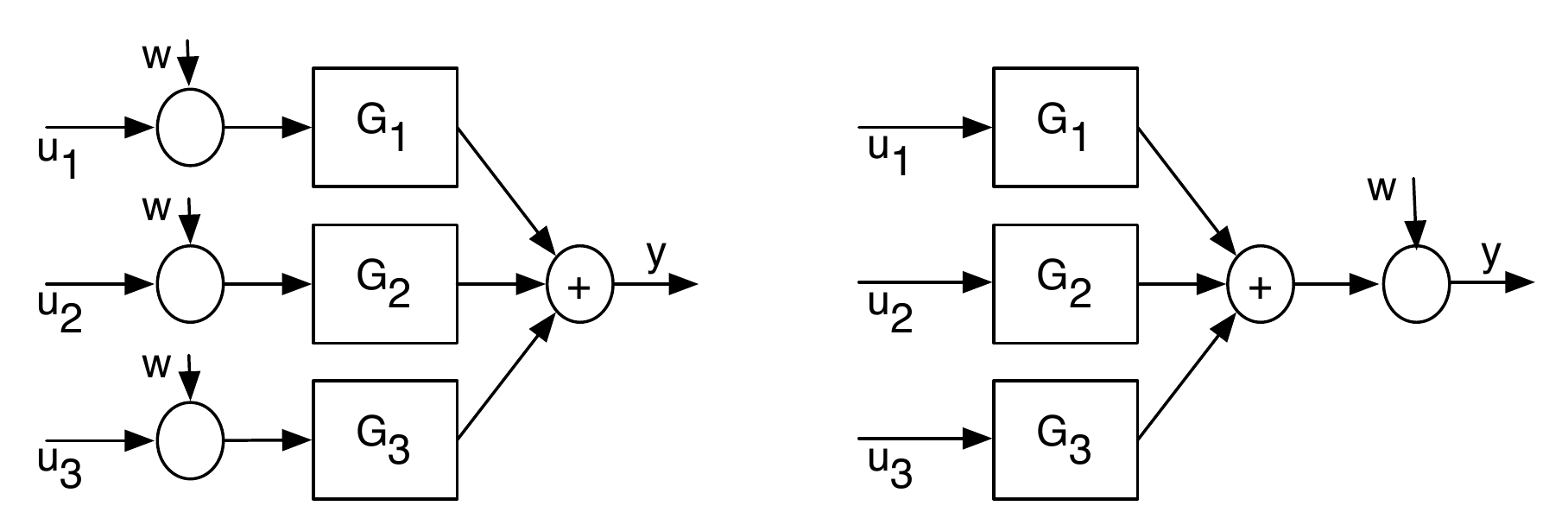}
\caption{Two architectures for differential privacy. (a) Input perturbation. (b) Output perturbation.}
\label{fig: DP basic architectures}
\end{figure}

Let $r_i = 2$ for all $i$ and $\mathcal G$ be linear as in the Corollary \ref{cor: differentially private large scale system - linear}, 
and assume for simplicity the same bound $b_1^2 = \ldots = b_n^2 = B$
for the allowed variations in energy of each input signal. 
We have then two simple mechanisms producing a differentially private version of $\mathcal G$,
depicted on Fig.~\ref{fig: DP basic architectures}. 
The first one directly perturbs each input signal $u_i$ by adding to it a white Gaussian noise $w_i$
with $w_{i,t} \sim \mathcal N(0,\sigma^2 I_{m_i})$ and $\sigma^2 = \kappa(\delta,\epsilon)^2 B$.
These perturbations on each input channel are then passed through $\mathcal G$,
leading to a mean squared error (MSE) for the output equal to 
$\kappa(\delta,\epsilon)^2 B \|\mathcal G\|^2_2 
= \kappa(\delta,\epsilon)^2 B \sum_{i=1}^n \|\mathcal G_i\|^2_2$. 
Alternatively, we can add a single source of noise at the output of $\mathcal G$ 
according to Corollary \ref{cor: differentially private large scale system - linear},
in which case the MSE is 
$\kappa(\delta,\epsilon)^2 B \max_{1 \leq i \leq n} \{ \|\mathcal G_i\|_\infty^2 \}$.
Both of these schemes should be evaluated depending on the system $\mathcal G$
and the number $n$ of participants, as none of the error bound is better 
than the other in all circumstances. For example, if $n$ is small or if
the bandwidths of the individual transfer functions $\mathcal G_i$ do not overlap, 
the error bound for the input perturbation scheme can be smaller. 
Another advantage of this scheme is that the users can release 
differentially private signals themselves without relying on a trusted server.
However, there are cryptographic means for achieving the output perturbation
scheme without centralized trusted server as well, see, e.g., \cite{Shi11_DPaggregation}.

\begin{exmp}
Consider again the problem of releasing the average over the past $l$ periods
of the sum of the input signals, i.e., $\mathcal G = \sum_{i=1}^n \mathcal G_i$ with
$(\mathcal G_i u_i)_t = \frac{1}{l} \sum_{k=t-l+1}^t u_{i,k}$,
for all $i$. Then $\|\mathcal G_i\|_2^2 = 1/l$, whereas $\|\mathcal G_i\|_\infty = 1$, for all $i$.
The MSE for the scheme with the noise at the input is then 
$\kappa(\delta,\epsilon)^2 B n/l$. 
With the noise at the output, the MSE is $\kappa(\delta,\epsilon)^2 B$,
which is better exactly when $n> l$, i.e., the number of users is larger than the
averaging window.
\end{exmp}


\section{Differentially Private Kalman Filtering}		\label{section: private KF}

We now discuss the Kalman filtering problem subject to a differential privacy constraint. 
Compared to the previous section, for Kalman filtering it is assumed that more is 
publicly known about the dynamics of the processes producing the individual signals. 
The goal here is to guarantee differential privacy for the individual state trajectories. 
Section \ref{section: traffic monitoring example} describes an application
of the privacy mechanisms presented here to a traffic monitoring problem.

\subsection{A Differentially Private Kalman Filter}

Consider a set of $n$ linear systems, each with independent dynamics
\begin{align}	\label{eq: linear dynamics participant i}
x_{i,t+1} = A_i x_{i,t} + B_i w_{i,t}, \;\; t \geq 0, \;\; 1 \leq i \leq n,
\end{align}
where $w_i$ is a standard zero-mean Gaussian white noise process with covariance $\Exp[w_{i,t} w_{i,t'}] = \delta_{t-t'}$,
and the initial condition $x_{i,0}$ is a Gaussian random variable with mean $\bar x_{i,0}$, 
independent of the noise process $w_i$. 
System $i$, for $1 \leq i \leq n$, sends measurements 
\begin{align}	\label{eq: measurements participant i}
y_{i,t} = C_i x_{i,t} + D_i w_{i,t}
\end{align}
to a data aggregator. We assume for simplicity that the matrices $D_i$ are full row rank.
%
Figure \ref{fig: KF setup} shows this initial set-up.

The data aggregator aims at releasing a signal that asymptotically 
minimizes the minimum mean squared error with respect to a linear combination of the individual states. 
That is, the quantity of interest to be estimated at each period is $z_t = \sum_{i=1}^n L_i x_{i,t}$, 
where $L_i$ are given matrices, and we are looking for a causal estimator $\hat z$ constructed from the 
signals $y_i, 1 \leq i \leq n$, solution of
\[
\min_{\hat z} \lim_{T \to \infty} \frac{1}{T} \sum_{t=0}^{T-1} \Exp \left[ \|z_t - \hat z_t\|_2^2 \right].
\]
The data $\bar x_{i,0}, A_i, B_i, C_i, D_i, L_i, 1 \leq i \leq n,$ are assumed to be public information.
For all $1 \leq i \leq n$, we assume that the pairs $(A_i, C_i)$ are detectable and the pairs
$(A_i, B_i)$ are stabilizable. 
In the absence of privacy constraint, the optimal estimator is $\hat z_t = \sum_{i=1}^n L_i \hat x_{i,t}$, 
with $\hat x_{i,t}$ provided by the steady-state Kalman filter estimating the state of system $i$ from $y_i$  \cite{Anderson05_filtering},
and denoted $\mathcal K_i$ in the following. 

\begin{figure}
\centering
\includegraphics[height=3.5cm]{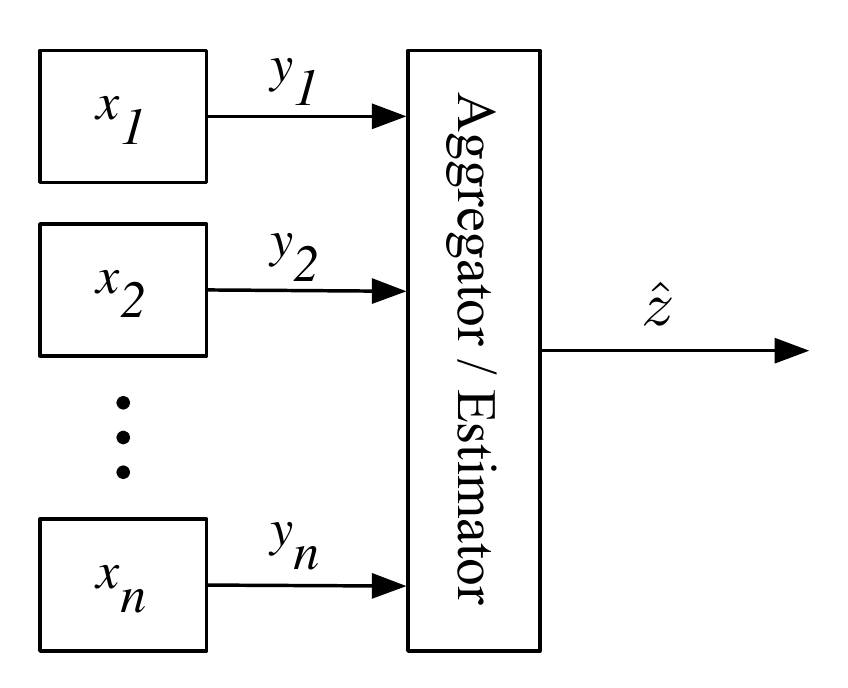}
\caption{Kalman filtering set-up.}
\label{fig: KF setup}
\end{figure}

Suppose now that the publicly released estimate should guarantee the differential privacy
of the participants. This requires that we first specify an adjacency relation on the appropriate space
of datasets. Let $x = [x_1^T, \ldots, x_n^T]^T$ and $y = [y_1^T, \ldots, y_n^T]^T$ denote
the global state and measurement signals. 
Assume that the mechanism is required to guarantee differential privacy with respect
to a subset $\mathcal S_i := \{i_1, \ldots, i_k\}$ of the coordinates of the state trajectory $x_i$.
Let the selection matrix $S_i$ be the diagonal matrix with $[S_i]_{jj} = 1$ if $j \in \mathcal S_i$, and $[S_i]_{jj} = 0$ otherwise. 
Hence $S_i v$ sets the coordinates of a vector $v$ which do not belong to the set $\mathcal S_i$ to zero.
Fix a vector $\rho \in \mathbb R_+^n$. The adjacency relation considered here is
\ifthenelse {\boolean{TwoColEq}} 
{
\begin{align} \label{eq: adjacency for state trajectories - bis}
&\Adj_{\mathcal S}^\rho(x,x') \text{ iff } \text{for some } i, \; \|S_ix_i - S_ix_i' \|_2 \leq \rho_i, \\
& (I-S_i)x_i = (I-S_i)x_i', \text{and } x_j = x_j' \text{ for all } j \neq i. \nonumber
\end{align}
}
{
\begin{align} \label{eq: adjacency for state trajectories - bis}
\Adj_{\mathcal S}^\rho(x,x') \text{ iff } \text{for some } i, &\; \|S_ix_i - S_ix_i' \|_2 \leq \rho_i, 
(I-S_i)x_i = (I-S_i)x_i', \\
& \text{and } x_j = x_j' \text{ for all } j \neq i. \nonumber
\end{align}
}
In words, two adjacent global state trajectories differ by the values of a single participant, say $i$.
Moreover, for differential privacy guarantees we are constraining the range in energy variation
in the signal $S_i x_i$ of participant $i$ to be at most $\rho_i^2$. Hence, the distribution 
on the released results should be essentially the same if a participant's state signal value 
$S_i x_{i,t_0}$ at some single specific time $t_0$ were replaced by $S_i x'_{i,t_0}$ with 
$\|S_i (x_{i,t_0} - x'_{i,t_0})\| \leq \rho_i$, but the privacy guarantee should also hold for 
smaller instantaneous deviations on longer segments of trajectory. 
Other adjacency relations could be considered, e.g., directly on the measured signals $y$ or more generally 
on linear combinations of the components of individual states.

Depending on which signals on Fig. \ref{fig: KF setup} are actually published, and similarly to the discussion
of Section \ref{section: approximation set-ups}, there are different points at which a privacy inducing noise can be introduced.
First, for the input noise injection mechanism, the noise can be added by each participant directly 
to their transmitted measurement signal $y_i$.
Namely, since for two state trajectories $x_i, x_i'$ adjacent according to (\ref{eq: adjacency for state trajectories - bis}) we have
$x_i - x_i' = S_i(x_i-x_i')$, the variation for the corresponding measured signals can be bounded as follows
\[
\|y_i - y_i'\|_2 = \|C_i S_i (x_i - x_i') \|_2 = \|C_i S_i S_i (x_i - x_i') \|_2 \leq \sigma_{\max}(C_i S_i) \rho_i.
\]
Hence differential privacy can be guaranteed if participant $i$ adds to $y_i$ a white Gaussian noise with covariance matrix 
$\kappa(\delta,\epsilon)^2 \rho_i^2 \sigma^2_{\max} (C_i S_i) I_{p_i}$, where $p_i$ is the dimension of $y_{i,t}$.
Note that in this sensitivity computation the measurement noise $D_i w_i$ has the same realization independently 
of the considered variation in $x_i$.
At the data aggregator, the privacy-preserving noise can be taken into account in the design of the Kalman filter, 
since it can be viewed as an additional measurement noise.
Again, an advantage of this mechanism is its simplicity of implementation when the participants 
do not trust the data aggregator, since the transmitted signals are already differentially private.

Next, consider the output noise injection mechanism. 
Since we assume that $\bar x^i_0$ is public information, the initial condition $\hat x_{i,0}$ of each state estimator is fixed. 
Consider now two state trajectories $x, x'$, adjacent according to (\ref{eq: adjacency for state trajectories - bis}), 
and let $\hat z, \hat z'$ be the corresponding estimates produced by the Kalman filters. We have
\[
\hat z - \hat z' = L_i \mathcal K_i (y_i - y_i') = L_i \mathcal K_i C_i S_i (x_i - x_i'),
\]
where we recall that $\mathcal K_i$ is the $i^{th}$ Kalman filter. Hence
$
\| \hat z - \hat z' \|_2 \leq \gamma_i \rho_i,
$
where $\gamma_i$ is the $\mathcal H_\infty$ norm of the transfer function $L_i \mathcal K_i C_i S_i$.
We thus have the following theorem.

\begin{thm}		\label{thm: general output perturbation mechanism result}
A mechanism releasing $\left( \sum_{i=1}^n L_i \mathcal K_i y_i \right) + \gamma \; \kappa(\delta,\epsilon) \; \nu$, 
where $\nu$ is a standard white Gaussian noise independent of $\{w_i\}_{1 \leq i \leq n}, \{x_{i,0}\}_{1 \leq i \leq n}$, 
and $\gamma = \max_{1 \leq i \leq n} \{\gamma_i \rho_i\}$, with $\gamma_i$ the $\mathcal H_\infty$ norm of 
$L_i \mathcal K_i C_i S_i$, is differentially private for the adjacency relation (\ref{eq: adjacency for state trajectories - bis}).
\end{thm}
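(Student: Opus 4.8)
The plan is to reduce the infinite-horizon privacy claim to a family of finite-dimensional Gaussian mechanisms and then to absorb the measurement noise through resilience to post-processing. First I would invoke Lemma \ref{lem: technical FTDP condition}: it suffices to establish, for every adjacent pair $x,x'$ and every horizon $T$, the $(\epsilon,\delta)$ inequality (\ref{eq: finite-time DP}) for the truncated output $P_T\big(\sum_i L_i \mathcal K_i y_i + \gamma\,\kappa(\delta,\epsilon)\,\nu\big)$. This disposes of the measurability subtleties of the signal space $\ell_{s,e}^{m'}$ and lets me work with genuine finite-dimensional laws.

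The key structural observation is the decomposition $\sum_i L_i \mathcal K_i y_i = q(x) + N$, where $q(x) := \sum_i L_i \mathcal K_i C_i x_i$ is a deterministic linear functional of the protected state trajectory and $N := \sum_i L_i \mathcal K_i D_i w_i$ collects the measurement-noise contribution. Because $\nu$ is independent of $\{w_i\}$, and since for the privacy analysis the noise realization is held fixed while $x$ varies (so that $D_i w_i$ cancels in $q(x)-q(x')$, as noted just before the theorem), $N$ is a data-independent random variable that is moreover independent of $\nu$. The sensitivity computation established above gives, for adjacent $x,x'$ differing in component $i$, that $q(x)-q(x') = L_i \mathcal K_i C_i S_i(x_i-x_i')$ has $\ell_2$ norm at most $\gamma_i\rho_i \le \gamma$; since $\gamma_i$ is the $\mathcal H_\infty$ norm of $L_i \mathcal K_i C_i S_i$, equal to its induced $\ell_2$ gain, and this system is causal, the same bound holds for every truncation $P_T$. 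Hence the $\ell_2$ sensitivity of each finite marginal of $q$ is at most $\gamma$, uniformly in $T$.

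With these pieces in place I would conclude in two steps. Consider first the auxiliary mechanism $\tilde M(x) := q(x) + \gamma\,\kappa(\delta,\epsilon)\,\nu$. Applying Theorem \ref{thm: Gaussian mech} to each finite marginal, with noise standard deviation $\gamma\,\kappa(\delta,\epsilon) = \kappa(\delta,\epsilon)\,\Delta_2(P_T q)$ matched to the sensitivity bound, and then Lemma \ref{lem: technical FTDP condition}, shows that $\tilde M$ is $(\epsilon,\delta)$-differentially private for the adjacency relation (\ref{eq: adjacency for state trajectories - bis}); this is precisely the dynamic Gaussian mechanism of Theorem \ref{thm: differentially private large scale system}, in the linear form of Corollary \ref{cor: differentially private large scale system - linear}. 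The released mechanism is then $M(x) = \tilde M(x) + N$. Since $N$ is independent of $x$ and of $\nu$, the conditional law of $M(x)$ given $\tilde M(x)$ is the convolution with the fixed law of $N$ and does not depend on $x$; equivalently, $M$ accesses the data only through $\tilde M$ via a probability kernel that does not depend on the dataset. Theorem \ref{thm: resilience to post-processing} then yields that $M$ is $(\epsilon,\delta)$-differentially private, as claimed. The step I expect to require the most care is this clean separation of $N$ from the protected trajectory: one must argue that, because differential privacy imposes no distributional assumption on $x$, the measurement noise $D_i w_i$ may legitimately be treated as exogenous randomness independent of the data, so that it neither inflates the sensitivity (it cancels in $q(x)-q(x')$) nor erodes the privacy guarantee, entering only as benign post-processing.
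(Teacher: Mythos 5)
Your proposal is correct and follows essentially the same route as the paper: the same $\mathcal H_\infty$-norm bound on the $\ell_2$ sensitivity of $x \mapsto \sum_i L_i\mathcal K_i C_i S_i x_i$ under the adjacency relation (\ref{eq: adjacency for state trajectories - bis}), combined with the Gaussian mechanism and the finite-horizon reduction of Lemma \ref{lem: technical FTDP condition}. Your explicit decomposition of the output into $q(x)+N$ and the appeal to Theorem \ref{thm: resilience to post-processing} to absorb the measurement-noise term $N$ is simply a careful formalization of what the paper dispatches with the one-line remark that $D_i w_i$ has the same realization for both adjacent trajectories.
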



\subsection{Filter Redesign for Stable Systems}	\label{section: filter redesign - stable}

In the case of the output perturbation mechanism, one can potentially improve the MSE performance
of the filter with respect to the Kalman filter used in the previous subsection.
Namely, consider the design of $n$ filters of the form
\begin{align}
\hat x_{i,t+1} &= F_i \hat x_{i,t} + G_i y_{i,t}  	\label{eq: filter state} \\
\hat z_{i,t} &= H_i \hat x_{i,t} + K_i y_{i,t},	\label{eq: filter output}
\end{align}
for $1 \leq i \leq n$, where $F_i, G_i, H_i, K_i$ are matrices to determine. The estimator considered is
$\hat z_t = \sum_{i=1}^n \hat z_{i,t}$,
so that each filter output $\hat z_{i}$ should minimize the steady-state MSE with $z_{i} = L_i x_{i}$,
and the released signal should guarantee differential privacy with respect to 
(\ref{eq: adjacency for state trajectories - bis}). Assume first in this section that the system matrices $A_i$ 
are stable, in which case we also restrict the filter matrices $F_i$ to be stable.
Moreover, we only consider the design of full order filters, i.e., the dimensions of $F_i$ are greater or equal to those
of $A_i$, for all $1 \leq i \leq n$. 

Denote the overall state for each system and associated filter by $\tilde x_i = [x_i^T, \hat x_i^T]^T$.
The combined dynamics from $w_i$ to the estimation error $e_i := z_i - \hat z_i$ can be written
\begin{align*}
\tilde x_{i,t+1} &=  \tilde A_i \tilde x_{i,t} + \tilde B_i w_{i,t} \\
e_{i,t} &= \tilde C_i \tilde x_{i,t} + \tilde D_i w_{i,t},
\end{align*}
where
\[
\tilde A_i = \begin{bmatrix}
A_i & 0 \\
G_iC_i & F_i
\end{bmatrix}, \;\;
\tilde B_i = \begin{bmatrix}
B_i \\ G_i D_i
\end{bmatrix}, \;\;
\tilde C_i = \begin{bmatrix}
L_i - K_i C_i & -H_i
\end{bmatrix}, \;\;
\tilde D_i = -K_i D_i.
\]
The steady-state MSE for the $i^{th}$ estimator is then $\lim_{t \to \infty} \mathbb E[e_{i,t}^T e_{i,t}]$.
Moreover, we are interested in designing filters with small $\mathcal H_\infty$ norm, in order to minimize the
amount of noise introduced by the privacy-preserving mechanism, which ultimately also impacts the
overall MSE. Considering as in the previous subsection the sensitivity of filter $i$'s output to a change
from a state trajectory $x$ to an adjacent one $x'$ according to (\ref{eq: adjacency for state trajectories - bis}),
and letting $\delta x_i = x_i - x_i' = S_i (x_i - x_i') = S_i \delta x_i$, we see that the change in the output of filter $i$ 
follows the dynamics
\begin{align*}
\delta \hat x_{i,t+1} &= F_i \delta \hat x_{i,t} + G_i C_i S_i \delta x_i \\
\delta \hat z_i &= H_i \delta \hat x_{i,t} + K_i C_i S_i \delta x_i.
\end{align*}
Hence the $\ell_2$-sensitivity can be measured by the $\mathcal H_\infty$ norm of the transfer
function
\begin{align}	
\TF{F_i}{G_i C_i S_i}{H_i}{K_i C_i S_i}.
\end{align}

Simply replacing the Kalman filter in Theorem \ref{thm: general output perturbation mechanism result}, 
the MSE for the output perturbation mechanism guaranteeing $(\epsilon,\delta)$-privacy is then
\begin{align*}
\sum_{i=1}^n \|\tilde C_i (zI - \tilde A_i)^{-1} \tilde B_i + \tilde D_i \|_2^2 + \kappa(\delta,\epsilon)^2 
\max_{1\leq i \leq n} \{\gamma_i^2 \rho_i^2\}, \\
\text{with } \gamma_i := \| H_i (zI - F_i)^{-1} G_i C_i S_i + K_i C_i S_i \|_\infty.
\end{align*}
Hence minimizing this MSE leads us to the following optimization problem
\begin{align}
&\min_{\mu_i, \lambda, F_i, G_i, H_i, K_i} \quad 
\sum_{i=1}^n \mu_i + \kappa(\delta,\epsilon)^2 \lambda \label{eq: optimization filter} \\
& \text{s.t. } \forall \; 1 \leq i \leq n, \|\tilde C_i (zI - \tilde A_i)^{-1} \tilde B_i + \tilde D_i \|_2^2 \leq \mu_i, 
 \label{eq: optimization filter - constraint H2} \\
& \rho_i^2 \| H_i (zI - F_i)^{-1} G_i C_i S_i + K_i C_i S_i \|^2_\infty \leq \lambda.
\label{eq: optimization filter - constraint Hinf}
\end{align}
Assume without loss of generality that $\rho_i > 0$ for all $i$, since the privacy constraint 
for the signal $x_i$ vanishes if $\rho_i = 0$. 
The following theorem gives a convex sufficient condition in the form of Linear Matrix Inequalities (LMIs) 
guaranteeing that a choice of filter matrices $F_i, G_i, H_i, K_i$ satisfies the constraints 
(\ref{eq: optimization filter - constraint H2})-(\ref{eq: optimization filter - constraint Hinf}).

\begin{thm}	\label{thm: LMI constraints filter design}
The constraints (\ref{eq: optimization filter - constraint H2})-(\ref{eq: optimization filter - constraint Hinf}), for some $1 \leq i \leq n$,
are satisfied if there exists matrices $W_i, Y_i, Z_i, \hat F_i, \hat G_i, \hat H_i, \hat K_i$ such that $\Tr(W_i) < \mu_i$, 
\begin{align*}
\begin{bmatrix}
W_i & (L_i - \hat K_i C_i - \hat H_i) & (L_i - \hat K_i C_i) & -\hat K_i D_i \\
* & Z_i & Z_i & 0 \\
* & * & Y_i & 0 \\
* & * & * & I
\end{bmatrix} \succ 0, \\
\end{align*}
\begin{align*}
\begin{bmatrix}
Z_i & Z_i & Z_i A_i & Z_i A_i & Z_i B_i \\
* & Y_i & (Y_i A_i + \hat G_i C_i + \hat F_i) & (Y_i A_i + \hat G_i C_i) & (Y_i B_i + \hat G_i D_i) \\
* & * & Z_i & Z_i & 0 \\
* & * & * & Y_i & 0 \\
* & * & * & * & I
\end{bmatrix} \succ 0, \\
\end{align*}
\begin{align*}
\text{and } \;\; 
\begin{bmatrix}
Z_i & Z_i & 0 & 0 & 0 & 0 \\
* & Y_i & 0 & \hat F_i & 0 & \hat G_i C_i S_i \\
* & * & \frac{\lambda}{\rho^2_i} I & \hat H_i & 0 & \hat K_i C_i S_i \\
* & * & * & Z_i & Z_i & 0 \\
* & * & * & * & Y_i & 0 \\
* & * & * & * & * & I
\end{bmatrix} \succ 0.
\end{align*}
If these conditions are satisfied, one can recover admissible filter matrices $F_i, G_i, H_i, K_i$ by setting
\begin{align}		\label{eq: recovering the filter}
F_i = V_i^{-1} \hat F_i \hat Z_i^{-1} U_i^{-T}, \;\;
G_i = V_i^{-1} \hat G_i, \;\;
H_i = \hat H_i Z_i^{-1} U_i^{-T},
\;\; K_i = \hat K_i
\end{align}
where $U_i, V_i$ are any two nonsingular matrices such that $V_i U_i^T = I - Y_i Z_i^{-1}$.
\end{thm}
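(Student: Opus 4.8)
The plan is to reduce each of the two performance specifications to a Lyapunov-type matrix inequality that is bilinear in the filter matrices $(F_i,G_i,H_i,K_i)$ and an auxiliary certificate, and then to linearize these inequalities by a congruence transformation together with a change of variables, in the spirit of the full-order synthesis technique of Scherer--Gahinet--Apkarian. Since the index $i$ decouples the problem, I fix a single $i$ throughout and suppress it where convenient. First I would invoke the standard discrete-time LMI characterizations. For the $\mathcal H_2$ constraint (\ref{eq: optimization filter - constraint H2}), the squared $\mathcal H_2$ norm of $\tilde C_i(zI-\tilde A_i)^{-1}\tilde B_i+\tilde D_i$ is below $\mu_i$ if and only if there is a symmetric $X_i\succ 0$ satisfying a controllability-Gramian inequality $\tilde A_i X_i \tilde A_i^T - X_i + \tilde B_i \tilde B_i^T \prec 0$ together with a trace-coupling inequality $\Tr(\tilde C_i X_i \tilde C_i^T + \tilde D_i \tilde D_i^T)<\mu_i$; a Schur complement on the latter produces the theorem's first inequality (with $W_i$ dominating the output covariance), and the former becomes the second inequality. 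For the $\mathcal H_\infty$ constraint (\ref{eq: optimization filter - constraint Hinf}) I would apply the discrete-time bounded real lemma to the filter transfer function $H_i(zI-F_i)^{-1}G_iC_iS_i+K_iC_iS_i$, obtaining a single matrix inequality, bilinear in $(F_i,G_i,H_i,K_i)$ and a certificate $P_i\succ 0$, whose $(3,3)$ block carries the gain bound $\lambda/\rho_i^2$; this is the precursor of the theorem's third inequality.

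The key step is the linearizing change of variables. Because the filter is full order, I would partition the certificate as $X_i=\bigl[\begin{smallmatrix} Y_i & U_i \\ U_i^T & \hat Y_i\end{smallmatrix}\bigr]$ and its inverse with $(1,1)$-block governed by $Z_i$, and build congruence matrices out of $Y_i,Z_i,U_i,V_i$ subject to $V_iU_i^T=I-Y_iZ_i^{-1}$. Applying block-diagonal congruences of this type to the two $\mathcal H_2$ inequalities and to the bounded-real inequality, and substituting the new variables $\hat F_i,\hat G_i,\hat H_i,\hat K_i$ chosen so as to match (\ref{eq: recovering the filter}), every product of the certificate with an unknown filter matrix (the entries $X_i\tilde A_i$, $X_i\tilde B_i$, and the analogous terms in the filter-only system) collapses into an affine expression, reproducing the three LMIs verbatim, including the feedthrough $\tilde D_i=-K_iD_i=-\hat K_iD_i$ and the off-diagonal blocks such as $\hat G_iC_iS_i$ and $\hat K_iC_iS_i$. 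I expect this to be the main obstacle: one must verify, using the identity $V_iU_i^T=I-Y_iZ_i^{-1}$ and the block structure of $\tilde A_i,\tilde B_i,\tilde C_i,\tilde D_i$ repeatedly, that no residual bilinear term survives the congruence and that the transformed certificate blocks reduce exactly to the structured matrices $\bigl[\begin{smallmatrix} Z_i & Z_i \\ Z_i & Y_i\end{smallmatrix}\bigr]$ appearing in the statement.

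Finally I would address recovery and the converse direction, which is all the theorem actually asserts. Given a feasible tuple $(W_i,Y_i,Z_i,\hat F_i,\hat G_i,\hat H_i,\hat K_i)$, the first LMI forces $Y_i\succ Z_i\succ 0$ by a Schur complement, so $I-Y_iZ_i^{-1}$ is nonsingular and admits a factorization $V_iU_i^T$ with nonsingular $U_i,V_i$; choosing any such pair, I recover $(F_i,G_i,H_i,K_i)$ through (\ref{eq: recovering the filter}) and reconstruct the certificates $X_i$ and $P_i$. Reversing the congruence transformations then turns the three LMIs back into the original bilinear inequalities, and by the $\mathcal H_2$ characterization and the bounded real lemma these are exactly the constraints (\ref{eq: optimization filter - constraint H2})--(\ref{eq: optimization filter - constraint Hinf}). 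Positivity of $X_i$ and $P_i$ simultaneously certifies stability of $\tilde A_i$ and of $F_i$, so the recovered filter is admissible, completing the argument.
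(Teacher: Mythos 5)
Your overall strategy --- the standard discrete-time $\mathcal H_2$ and bounded-real LMI characterizations, followed by the Scherer--Gahinet--Apkarian full-order change of variables and a reversal of the congruences for recovery --- is the same as the paper's. But there is a concrete gap in how you treat the $\mathcal H_\infty$ constraint, and it is precisely the non-routine part of this proof. The sensitivity transfer function $H_i(zI-F_i)^{-1}G_iC_iS_i+K_iC_iS_i$ has state dimension $n$ (the filter state only), whereas the error system $(\tilde A_i,\tilde B_i,\tilde C_i,\tilde D_i)$ has state dimension $2n$. You propose to certify the gain bound with its own certificate $P_i\succ 0$ and then apply ``congruences of this type'' to all three inequalities. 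As stated this cannot reproduce the theorem's third LMI: that LMI is built around the blocks $\bigl[\begin{smallmatrix}Z_i&Z_i\\ Z_i&Y_i\end{smallmatrix}\bigr]$, i.e., around the \emph{same} $2n\times 2n$ certificate as the two $\mathcal H_2$ LMIs. The paper gets there by (i) padding the filter-only realization with a zero dynamics block, replacing $F_i$ by $\bigl[\begin{smallmatrix}0&0\\0&F_i\end{smallmatrix}\bigr]$ with matching zero input and output blocks, so that the sensitivity system lives in the same $2n$-dimensional state space, and (ii) imposing the common-Lyapunov-certificate restriction $\tilde P_1=\tilde P_2$. Without (i) the dimensions do not match; without (ii) the single change of variables $\hat F=VFU^TZ$, $\hat G=VG$, $\hat H=HU^TZ$ --- which is tied to the partition of one particular certificate --- cannot linearize both bilinear inequalities simultaneously, since independent certificates would force two incompatible sets of hatted variables.

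Relatedly, your closing claim that reversing the congruences turns the LMIs back into ``exactly'' the constraints obscures where the conservatism lives: restriction (ii) is exactly why the theorem is only a sufficient condition (the constraints can hold without any single $\tilde P$ certifying both). The direction the theorem actually asserts does go through once (i) and (ii) are in place --- reversing the congruences yields one $\tilde P\succ0$ that serves as certificate for both (\ref{eq: optimization filter - constraint H2}) and (\ref{eq: optimization filter - constraint Hinf}) --- but your plan as written has no mechanism for reconstructing an $\mathcal H_\infty$ certificate from the third LMI, because you never identified which system that LMI is a bounded-real inequality for. Your nonsingularity argument for $I-Y_iZ_i^{-1}$ (from $\bigl[\begin{smallmatrix}Z_i&Z_i\\ Z_i&Y_i\end{smallmatrix}\bigr]\succ0$, hence $Y_i\succ Z_i$) does match the paper's.
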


\begin{proof}
For simplicity of notation, let us remove the subscript $i$ in the constraints (\ref{eq: optimization filter - constraint H2})-(\ref{eq: optimization filter - constraint Hinf}),
since we are considering the design of the filters individually. Also, define $\bar \lambda = \lambda / \rho^2$.
The condition (\ref{eq: optimization filter - constraint H2}) is satisfied if and only if there exist matrices $W, \tilde P_1$ such that \cite{Skelton98_book} 
\begin{align} \label{eq: H2 MI}
\Tr(W) < \mu, 
\quad
\begin{bmatrix}
W & \tilde C & \tilde D \\
*  & \tilde P_1 & 0 \\
* & * & I
\end{bmatrix}
\succ 0,
\quad
\begin{bmatrix}
\tilde P_1 & \tilde P_1 \tilde A & \tilde P_1 \tilde B \\
* & \tilde P_1 & 0 \\
* & * & I
\end{bmatrix}
\succ 0.
\end{align}
%
For the constraint (\ref{eq: optimization filter - constraint Hinf}), first note that we have equality of the transfer functions
\[
\TF{F}{G C S}{H}{K C S} = \TF{\begin{array}{cc}A_1 & 0 \\ 0 & F\end{array}}
{\begin{array}{c}0 \\ GCS \end{array}}{\begin{array}{cc}0 & H \end{array}}{KCS}
\]
for any matrix $A_1$, in particular for $A_1$ the zero matrix of the same dimensions as $A$.
With this choice, denote
\[
\bar A = \begin{bmatrix} 0 & 0 \\ 0 & F \end{bmatrix}, \;\; \bar B = \begin{bmatrix} 0 \\ GCS \end{bmatrix}, \bar C = \begin{bmatrix} 0 & H \end{bmatrix}, \bar D = KCS.
\]
Then the constraint (\ref{eq: optimization filter - constraint Hinf}) can be rewritten
$
\|\bar C(sI - \bar A)^{-1} \bar B + \bar D \|_\infty < \bar \lambda, 
$
and is satisfied if and only if there exists a matrix $\tilde P_2$, of the same dimensions as $\tilde P_1$, such that \cite{Skelton98_book}
\begin{align} \label{eq: Hinf MI}
\begin{bmatrix}
\tilde P_2 & 0 & \tilde P_2 \bar A & \tilde P_2 \bar B \\
* & \bar \lambda I & \bar C & \bar D \\
* & * & \tilde P_2 & 0 \\
* & * & * & I
\end{bmatrix} \succ 0.
\end{align}
The sufficient condition of the theorem is obtained by adding the constraint 
\begin{align}	\label{eq: Lyapunov shaping}
\tilde P := \tilde P_1 = \tilde P_2
\end{align}
and using the change of variable suggested in \cite[p. 902]{Scherer97_multiObj}.
Namely, assume that there are matrices $F, G, H, K, \tilde P$, and $W$ satisfying (\ref{eq: H2 MI}), (\ref{eq: Hinf MI}), (\ref{eq: Lyapunov shaping}).
We partition the positive definite matrix $\tilde P$ and its inverse as
\[
\tilde P = \begin{bmatrix}
Y & V \\ V^T & \hat Y
\end{bmatrix},
\quad 
\tilde P^{-1} =  \begin{bmatrix}
X & U \\ U^T & \hat X
\end{bmatrix}.
\]
Note that $YX + VU^T = I$. Define 
\begin{align}		\label{eq: J matrices definitions}
J_1 = \begin{bmatrix}
X & I \\ U^T & 0
\end{bmatrix}, 
\quad
J_2 = \begin{bmatrix}
I & Y \\ 0 & V^T
\end{bmatrix}.
\end{align}
Then we have $\tilde P J_1 = J_2$. Moreover
\begin{align*}
&J_1^T \tilde P J_1 = 
\begin{bmatrix}
X & I \\ I & Y
\end{bmatrix}, \quad
J_1^T \tilde P \tilde A J_1 = 
\begin{bmatrix}
AX & A \\ Y A X+V G C X + V F U^T & Y A + V G C
\end{bmatrix}, \\
&J_1^T \tilde P \tilde B = \begin{bmatrix} B \\ Y B + V G D \end{bmatrix}, 
\quad
\tilde C J_1 = \begin{bmatrix} (L-KC) X - H U^T & L- KC \end{bmatrix}.
\end{align*}
Similarly,
\begin{align*}
J_1^T \tilde P \bar A J_1 = 
\begin{bmatrix}
0 & 0 \\ V F U^T & 0
\end{bmatrix}, 
\quad 
J_1^T \tilde P \bar B = \begin{bmatrix} 0 \\ V G C S \end{bmatrix}, 
\quad
\bar C J_1 = \begin{bmatrix} H U^T & 0 \end{bmatrix}.
\end{align*}
Let $Z = X^{-1}$. Consider first the congruence transformations
\begin{itemize}
\item of the first LMI in (\ref{eq: H2 MI}) by $\text{diag}(I, J_1, I)$ and then by $\text{diag}(I, Z, I, I)$,
\item of the second LMI in (\ref{eq: H2 MI}) by $\text{diag}(J_1, J_1, I)$, and then by $\text{diag}(Z, I, Z, I, I)$,
\item and of the LMI (\ref{eq: Hinf MI}) by $\text{diag}(J_1, I, J_1, I)$, and then by $\text{diag}(Z, I, I, Z, I, I)$.
\end{itemize}
Then, the transformation
$
\hat F = VF U^T Z,
\hat G = VG,
\hat H = H U^T Z,
$
between the filter matrix variables $F, G, H$ and the new variables $\hat F, \hat G, \hat H$ leads to the LMIs of the theorem.
Hence these LMIs are necessarily satisfied if the constraints (\ref{eq: H2 MI}), (\ref{eq: Hinf MI}) are satisfied together with (\ref{eq: Lyapunov shaping}). 

Now suppose that the LMIs of the theorem are satisfied. 
Since $Z \succ 0$, we can define $X = Z^{-1}$. Moreover, since
$\begin{bmatrix}Z & Z \\ Z & Y\end{bmatrix} \succ 0$, we have $Y \succ X^{-1}$
by taking the Schur complement, and so $I-XY$ is nonsingular.
Hence we can find two $n \times n$ nonsingular matrices $U, V$ such that $UV^T = I-XY$.
Then define the nonsingular matrices $J_1, J_2$ as in (\ref{eq: J matrices definitions}), let
$\tilde P = J_2 J_1^{-1}$, and define the matrices $F, G, H, K$ as in (\ref{eq: recovering the filter}).
Since $J_1$ is nonsingular, we can then reverse the congruence transformations to recover 
(\ref{eq: H2 MI}), (\ref{eq: Hinf MI}), which shows that the constraints 
(\ref{eq: optimization filter - constraint H2}), (\ref{eq: optimization filter - constraint Hinf})
are satisfied.
\end{proof}

Note that the problem (\ref{eq: optimization filter}) is also linear in $\mu_i, \lambda$. These variables can then
be minimized subject to the LMI constraints of Theorem \ref{thm: LMI constraints filter design} in order to design
a good filter trading off estimation error and $\ell^2$-sensitivity to minimize the overall MSE. However, including these variables 
directly in the optimization problem can lead to ill-conditioning in the inversion of the matrices $U_i, V_i$ 
in (\ref{eq: recovering the filter}), a phenomenon discussed together with a recommended fix in \cite[p. 903]{Scherer97_multiObj}.

\subsection{Unstable Systems}		\label{section: filter redesign - unstable}

If the dynamics (\ref{eq: linear dynamics participant i}) are not stable, the linear filter design approach
presented in the previous paragraph is not valid. To handle this case, we can further restrict the class of filters.
As before we minimize the estimation error variance together with the sensitivity measured 
by the $\mathcal H_\infty$ norm of the filter.
Starting from the general linear filter dynamics (\ref{eq: filter state}), (\ref{eq: filter output}), we can consider 
designs where $\hat x_i$ is an estimate of $x_i$, and set $H_i = L_i, K_i = 0,$ so that $\hat z_i = L_i \hat x_i$ is an estimate
of $z_i = L_i x_i$. The error dynamics $e_i := x_i - \hat x_i$ then satisfies
\[
e_{i,t+1} = (A_i-G_iC_i) x_{i,t} - F_i \hat x_{i,t} + (B_i - G_i D_i) w_{i,t}.
\]
Setting $F_i = (A_i - G_i C_i)$ gives an error dynamics independent of $x_i$
\begin{equation}	\label{eq: error dynamics - unstable case} 
e_{i,t+1} = (A_i-G_iC_i) e_{i,t} + (B_i - G_i D_i) w_{i,t},
\end{equation}
and leaves the matrix $G_i$ as the only remaining design variable. Note however that the resulting class of filters
contains the (one-step delayed) Kalman filter. To obtain a bounded error, there is an implicit constraint on $G_i$ that $A_i - G_i C_i$ 
should be stable. 

Now, following the discussion in the previous subsection, minimizing the MSE while enforcing differential privacy 
leads to the following optimization problem
\begin{align}
&\min_{\mu_i, \lambda, G_i} \quad \sum_{i=1}^n \mu_i + \kappa(\delta,\epsilon)^2 \lambda \label{eq: optimization filter - unstable} \\
&\text{s.t. } \; \forall \; 1 \leq i \leq n, \;\; 
\| L_i (zI - (A_i - G_i C_i))^{-1} (B_i - G_i D_i) \|_2^2 \leq \mu_i, \label{eq: optimization filter - constraint H2 - unstable} \\
& \rho_i^2 \| L_i (zI - (A_i - G_i C_i))^{-1} G_i C_i S_i \|^2_\infty \leq \lambda. \label{eq: optimization filter - constraint Hinf - unstable}
\end{align}
Again, one can efficiently check a sufficient condition, in the form of the LMIs of the following theorem, 
guaranteeing that the constraints (\ref{eq: optimization filter - constraint H2 - unstable}), (\ref{eq: optimization filter - constraint Hinf - unstable}) 
are satisfied. Optimizing over the variables $\lambda_i, \mu_i, G_i$ can then be done using semidefinite programming.

\begin{thm}	\label{thm: LMI constraints filter design - unstable}
The constraints (\ref{eq: optimization filter - constraint H2 - unstable})-(\ref{eq: optimization filter - constraint Hinf - unstable}), for some $1 \leq i \leq n$,
are satisfied if there exists matrices $Y_i, X_i, \hat G_i$ such that
\begin{align}	\label{H2 unstable LMI}
\Tr(Y_i L_i^T L_i) < \mu_i, \;\; 
\begin{bmatrix}
Y_i & I \\ I & X_i
\end{bmatrix} \succ 0, \;\; 
\begin{bmatrix}
X_i & X_i A_i - \hat G_i C_i & X_i B_i - \hat G_i D_i \\
* & X_i & 0 \\
* & * & I
\end{bmatrix} \succ 0, 
\end{align}
\begin{align}			\label{Hinf unstable LMI}
\text{and }
\begin{bmatrix}
X_i & 0 & X_i A_i - \hat G_i C_i & \hat G_i C_i S_i  \\ 
* & \frac{\lambda}{\rho_i^2} I & L_i & 0 \\
* & * & X_i & 0 \\
* & * & * & I
\end{bmatrix} \succ 0.
\end{align}
If these conditions are satisfied, one can recover an admissible filter matrix $G_i$ by setting
\begin{align*} %
G_i = X_i^{-1} \hat G_i.
\end{align*}
\end{thm}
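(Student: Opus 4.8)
The plan is to mirror the argument of Theorem~\ref{thm: LMI constraints filter design}, while exploiting the fact that in the restricted (unstable) design the only free parameter is $G_i$, so that a \emph{single} linear change of variable replaces the congruence transformations by $J_1, J_2$. Throughout I drop the subscript $i$, write $A_{cl} := A - GC$, and set $\bar\lambda := \lambda/\rho^2$. By construction of the error dynamics (\ref{eq: error dynamics - unstable case}), both performance measures are norms of transfer functions sharing the state matrix $A_{cl}$: the $\mathcal H_2$ constraint (\ref{eq: optimization filter - constraint H2 - unstable}) concerns the system $(A_{cl}, B - GD, L, 0)$, while the $\mathcal H_\infty$ constraint (\ref{eq: optimization filter - constraint Hinf - unstable}) concerns $(A_{cl}, GCS, L, 0)$.

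First I would write down the standard LMI certificates for each norm using a common Lyapunov matrix $X$. For the $\mathcal H_2$ bound, the third inequality in (\ref{H2 unstable LMI}) is, after a Schur complement on its lower-right $I$ block, equivalent to $X^{-1} \succ A_{cl} X^{-1} A_{cl}^T + (B-GD)(B-GD)^T$; hence $X \succ 0$ forces $A_{cl}$ to be stable and makes $P := X^{-1}$ a supersolution of the controllability Gramian equation, so that $\|L(zI-A_{cl})^{-1}(B-GD)\|_2^2 \le \Tr(L X^{-1} L^T)$. Introducing $Y$ through the Schur-complement inequality $\begin{bmatrix} Y & I \\ I & X\end{bmatrix}\succ 0$, i.e. $Y \succ X^{-1}$, gives $\|\cdot\|_2^2 \le \Tr(L X^{-1} L^T) = \Tr(X^{-1} L^T L) \le \Tr(Y L^T L) < \mu$, which is the trace constraint of (\ref{H2 unstable LMI}). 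For the $\mathcal H_\infty$ bound I would invoke the discrete-time bounded real lemma \cite{Skelton98_book} with the \emph{same} matrix $X$: this is exactly the four-block inequality (\ref{Hinf unstable LMI}), certifying $\|L(zI-A_{cl})^{-1}GCS\|_\infty^2 < \bar\lambda$, which after multiplication by $\rho^2$ is (\ref{eq: optimization filter - constraint Hinf - unstable}).

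The key step is the linearization. As written, both certificates are bilinear in $(X,G)$ because they contain the products $X A_{cl} = XA - XGC$, $X(B-GD) = XB - XGD$, and $XGCS$. Imposing a common $X$ for the $\mathcal H_2$ and $\mathcal H_\infty$ conditions --- the Lyapunov-shaping device of \cite{Scherer97_multiObj} already used in (\ref{eq: Lyapunov shaping}) --- is precisely what lets the single substitution $\hat G := XG$ linearize both simultaneously: it turns $X A_{cl} \mapsto XA - \hat G C$, $X(B-GD) \mapsto XB - \hat G D$, and $XGCS \mapsto \hat G CS$, yielding verbatim the inequalities (\ref{H2 unstable LMI})--(\ref{Hinf unstable LMI}), now jointly affine in $(X, \hat G, Y, \mu, \lambda)$.

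For the converse direction and filter recovery I would start from feasible $(X, Y, \hat G)$: the block inequalities force $X \succ 0$, so $X$ is invertible and I may set $G = X^{-1}\hat G$; reversing the substitution $\hat G = XG$ restores the two standard certificates above, whence (\ref{eq: optimization filter - constraint H2 - unstable})--(\ref{eq: optimization filter - constraint Hinf - unstable}) hold, completing the ``if'' claim. The main obstacle is conceptual rather than computational: one must justify that forcing a single $X$ across both norm characterizations is legitimate, which is exactly why the result is stated as a \emph{sufficient} (convex) condition, and one must check that the lone change of variable $\hat G = XG$ reproduces the stated LMIs term by term. Relative to Theorem~\ref{thm: LMI constraints filter design} this is markedly simpler: because $F = A - GC$ is no longer an independent variable, no partitioning of $\tilde P$ or congruence by $J_1, J_2$ is required, and the linearization collapses to the one-line map $G \mapsto \hat G = XG$.
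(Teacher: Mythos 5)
Your proposal is correct and takes essentially the same route as the paper's proof: the standard $\mathcal H_2$ (Lyapunov/Gramian) and bounded-real LMI certificates written with a \emph{common} matrix $X$ (which is exactly where the conservatism enters), the slack variable $Y \succ X^{-1}$ for the trace bound, and the single linearizing substitution $\hat G = XG$ with recovery $G = X^{-1}\hat G$. Your write-up merely spells out the Schur-complement steps that the paper leaves implicit.
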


\begin{proof}
As in Theorem (\ref{thm: LMI constraints filter design}), we simplify the notation below by omitting the subscript $i$.
First, from the error dynamics (\ref{eq: error dynamics - unstable case}), the constraint (\ref{eq: optimization filter - constraint H2 - unstable}) 
is satisfied if and only if there exists a positive definite matrix $P$ such that \cite{Skelton98_book}
\begin{align*}
\Tr(P L^T L) < \mu, \;\; 
(A_i-G_iC_i) P (A_i-G_iC_i)^T + (B_i - G_i D_i) (B_i - G_i D_i)^T \prec P.
\end{align*}
Letting $X = P^{-1}$, introducing the slack variable $Y$, the change of variable $\hat G = XG$, 
and using the Schur complement shows that these conditions are equivalent to the existence 
of two positive definite matrices $X, Y$ such that (\ref{H2 unstable LMI}) is satisfied.
The LMI (\ref{Hinf unstable LMI}) derived from (\ref{eq: optimization filter - constraint Hinf - unstable})
is standard \cite{Skelton98_book}, see also (\ref{eq: Hinf MI}). As in Theorem \ref{thm: LMI constraints filter design}, 
we restrict the search in this LMI to the same matrix $X$ as in (\ref{H2 unstable LMI}), 
which results in a convex problem but introduces some conservatism.
\end{proof}


\section{A Traffic Monitoring Example}	\label{section: traffic monitoring example}

Consider a simplified description of a traffic monitoring system, inspired by real-world implementations
and associated privacy concerns as discussed in \cite{Sun04_trafficEstimation, Hoh11_VTL_trafficMonitoring} 
for example. There are $n$ participating vehicles traveling on a straight road segment. Vehicle $i$, for $1 \leq i \leq n$, is represented
by its state $x_{i,t} = [\xi_{i,t}, \dot \xi_{i,t}]^T$, with $\xi_i$ and $\dot \xi_i$ its position and velocity respectively.
This state evolves as a second-order system with unknown random acceleration inputs
\[
x_{i,t+1} = 
\begin{bmatrix}
1 & T_s \\ 
0 & 1
\end{bmatrix}
x_{i,t}
+ \sigma_{i1}
\begin{bmatrix}
T_s^2 / 2 & 0 \\ T_s & 0
\end{bmatrix}
w_{i,t}, 
\]
where $T_s$ is the sampling period, $w_{i,t}$ is a standard white Gaussian noise, and $\sigma_{i1} > 0$. 
Assume for simplicity that the noise signals $w_j$ for different vehicles are independent.
The traffic monitoring service collects GPS measurements from the 
vehicles \cite{Hoh11_VTL_trafficMonitoring}, i.e., receives noisy readings 
of the positions at the sampling times
\[
y_{i,t} = 
\begin{bmatrix}
1 & 0 
\end{bmatrix}
x_{i,t} 
+ 
\sigma_{i2}
\begin{bmatrix}
0 & 1 
\end{bmatrix}
w_{i,t},
\]
with $\sigma_{i2} > 0$.

The purpose of the traffic monitoring service is to continuously provide an estimate of the 
traffic flow velocity on the road segment, which is approximated by releasing 
at each sampling period an estimate of the average velocity of the participating vehicles, i.e., 
of the quantity
\begin{align}	\label{eq: sample average velocity}
z_t = \frac{1}{n} \sum_{i=1}^n \dot \xi_{i,t}.
\end{align}
With a larger number of participating vehicles, the sample average (\ref{eq: sample average velocity})
represents the traffic flow velocity more accurately. However, while individuals are generally interested 
in the aggregate information provided by such a system, e.g., to estimate their commute time, 
they do not wish their individual trajectories to be publicly revealed, since these might contain 
sensitive information about their driving behavior, frequently visited
locations, etc. 
Privacy-preserving mechanisms for such location-based services are often based on ad-hoc temporal and
spatial cloaking of the measurements \cite{Gruteser03_kAnonymLBS, Hoh11_VTL_trafficMonitoring}. 
However, in the absence of a quantitative definition of privacy and a clear model of the adversary capabilities, 
it is common that proposed techniques are later argued to be deficient \cite{Shokri09_LBSmetric, Shokri10_kAnonFails}. 
The temporal cloaking scheme proposed in \cite{Hoh11_VTL_trafficMonitoring} for example aggregates 
the speed measurements of $k$ users successively crossing a given line, but does not necessarily 
protect individual trajectories against adversaries exploiting temporal relationships between these
aggregated measurements \cite{Shokri09_LBSmetric}.



\subsubsection{Numerical Example} 

We now discuss some differentially private estimators introduced in Section \ref{section: private KF}, in the context of this example.
All individual systems are identical, hence we drop the subscript $i$ in the notation. Assume that the
selection matrix is  $S = \begin{bmatrix} 1 & 0 \\ 0 & 0 \end{bmatrix}$, that $\rho = 100$ m, $T_s=1 s$,
$\sigma_{i1} = \sigma_{i2} =  1$, and $\epsilon = \ln 3$, $\delta = 0.05$. A single Kalman filter 
denoted $\mathcal K$ is designed to provide an estimate $\hat x_i$ of each state vector $x_i$, so that 
in absence of privacy constraint the final estimate would be 
\[
\hat z = \begin{bmatrix} 0 & \frac{1}{n} \end{bmatrix} \sum_{i=1}^n \mathcal K y_i = 
\begin{bmatrix} 0 & 1 \end{bmatrix} \mathcal K \left( \frac{1}{n} \sum_{i=1}^n y_i \right).
\]
Finally, assume that we have $n = 200$ participants, and that their mean initial velocity is $45$ km/h.

In this case, the input noise injection scheme without modification of the Kalman filter is essentially unusable
since its steady-state Root-Mean-Square-Error (RMSE) is almost $26$ km/h. However, modifying the Kalman filter to
take the privacy preserving noise into account as additional measurement noise leads to the best RMSE
of all the schemes discussed here, of about $0.31$ km/h.
Using the Kalman filter $\mathcal K$ with the output noise injection scheme leads to an RMSE of $2.41$ km/h. Moreover
in this case $\| \mathcal K \|_\infty = 0.57$ is quite small, and trying to balance estimation with sensitivity using the LMI
of Theorem \ref{thm: LMI constraints filter design - unstable} (by minimizing the MSE while constraining 
the $\mathcal H_\infty$ norm rather than using the objective function (\ref{eq: optimization filter - unstable}))
only allowed us to reduce this RMSE to $2.31$ km/h. 
However, an issue that is not captured in these steady-state estimation error measures is that of convergence time
of the filters. This is illustrated on Fig. \ref{fig: velocity estimation}, which shows a trajectory of the average 
velocity of the participants, together with the estimates produced by the input noise injection scheme with
compensating Kalman filter and the output noise injection scheme following $\mathcal K$. Although the steady-state RMSE
of the first scheme is much better, its convergence time of more than $1$ min, due to the large privacy-preserving noise, 
is also much larger. This can make this scheme impractical, e.g., if the system is supposed to respond quickly to an abrupt
change in average velocity.

\begin{figure}
\centering
\includegraphics[height=6cm]{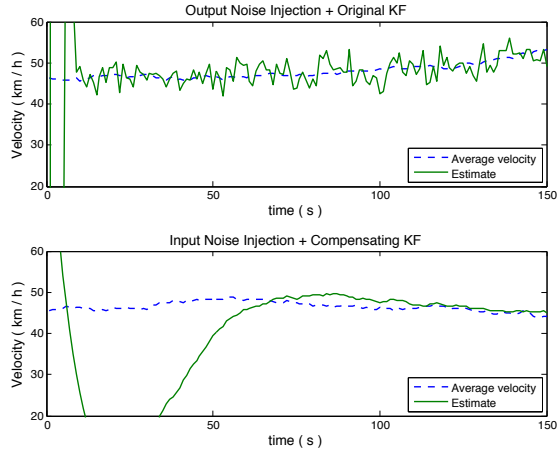}
\caption{Two differentially private average velocity estimates, with $n = 200$ users. 
The Kalman filters are initialized with the same incorrect initial mean velocity ($75$ km/h), in order to
illustrate their convergence time.}
\label{fig: velocity estimation}
\end{figure}


\section{Filtering Event Streams}	\label{section: binary streams}

This section considers an application scenario 
motivated by the work of \cite{Dwork10_DPcounter, Chan10_counter}.
Assume now that an input signal is integer valued, i.e., $u_t \in \mathbb Z$ for all $t \geq 0$. 
Such a signal can record the occurrences of events of interest over time, e.g., the number
of transactions on a commercial website, or the number of people newly infected with a virus. 
As in \cite{Dwork10_DPcounter, Chan10_counter}, two signals $u$ and $u'$
are adjacent if and only if they differ by one at a single time, or equivalently
\begin{equation}	\label{eq: adjacency event-level DP}
\Adj(u,u') \text{ iff } \|u-u'\|_1 = 1.
\end{equation}
The motivation for this adjacency relation is that a given individual contributes
a single event to the stream, and we want to preserve \emph{event-level privacy} \cite{Dwork10_DPcounter},
that is, hide to some extent the presence or absence of an event at a particular time.
This could for example prevent the inference of individual transactions from publicly
available collaborative filtering outputs, as in \cite{Calandrino11_privacyAttackCollabFilt}.
Even though individual events should be hidden, we are still interested in producing
approximate filtered versions of the original signal, e.g., a privacy-preserving moving average
of the input tracking the frequency of events. 
The papers \cite{Dwork10_DPcounter, Chan10_counter} consider specifically the
design of a private counter or accumulator, i.e., a system producing an output signal $y$ 
with $y_t = y_{t-1} + u_t$, where $u$ is binary valued. Note that this system is unstable.
A number of other filters with slowly and monotonically decreasing impulse responses 
are considered in \cite{Bolot11_DPdecayingSums}, 
using a technique similar to \cite{Chan10_counter} based on binary trees. 
Here we show certain approximations of a general linear stable filter $\mathcal G$ 
that preserve event-level privacy. We first make the following remark.

\begin{lem}
Let $\mathcal G$ be a single-input single-output linear system with impulse response $g$. 
Then for the adjacency relation (\ref{eq: adjacency event-level DP})
on integer-valued input signals, the $\ell_p$ sensitivity of $\mathcal G$ is
$\Delta_p \mathcal G = \|g\|_p$. In particular for $p=2$, we have
$\Delta_2 \mathcal G = \|\mathcal G\|_2$, the $\mathcal H_2$ norm of $\mathcal G$.
\end{lem}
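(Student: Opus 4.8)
The plan is to reduce the sensitivity computation to the system's response to a single unit impulse, and then invoke linearity and time-invariance. First I would note that since $\mathcal{G}$ is linear, $\mathcal{G}u - \mathcal{G}u' = \mathcal{G}(u-u')$, so by Definition \ref{defn: sensitivity} the sensitivity
\[
\Delta_p \mathcal{G} = \max_{u,u':\, \Adj(u,u')} \|\mathcal{G}(u-u')\|_p
\]
depends on the adjacent pair only through the difference $v := u - u'$. The adjacency relation (\ref{eq: adjacency event-level DP}) requires $\|v\|_1 = 1$, and since $u$ and $u'$ are integer-valued so is $v$; hence $v$ has exactly one nonzero coordinate, of magnitude $1$. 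In other words, $v = \pm e_{t_0}$, the sequence equal to $\pm 1$ at a single time $t_0 \geq 0$ and zero elsewhere.

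Next I would use the fact that a single-input single-output system specified by \emph{one} impulse response $g$ is linear and time-invariant, so that $\mathcal{G} e_{t_0}$ is simply $g$ shifted to begin at time $t_0$. Since a time shift leaves the $\ell_p$ norm unchanged (and stability guarantees $g \in \ell_p$, so the norm is finite), we obtain $\|\mathcal{G}(\pm e_{t_0})\|_p = \|g\|_p$ for every admissible $t_0$ and every sign. As this value is the same for all adjacent pairs, the maximum defining the sensitivity is attained and equals $\|g\|_p$ exactly, giving $\Delta_p \mathcal{G} = \|g\|_p$.

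Finally, for the case $p = 2$ I would invoke Parseval's theorem for discrete-time signals. For a scalar transfer function one has $\Tr(\mathcal{G}^*(e^{i\omega}) \mathcal{G}(e^{i\omega})) = |\mathcal{G}(e^{i\omega})|^2$, and Parseval gives $\frac{1}{2\pi}\int_{-\pi}^\pi |\mathcal{G}(e^{i\omega})|^2 \, d\omega = \sum_{t \geq 0} |g_t|^2 = \|g\|_2^2$. Comparing with the definition of the $\mathcal{H}_2$ norm recalled earlier yields $\|g\|_2 = \|\mathcal{G}\|_2$, hence $\Delta_2 \mathcal{G} = \|\mathcal{G}\|_2$.

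The calculation is essentially routine once the structure is recognized; the only point deserving care is the reduction step, where integer-valuedness of the signals is what forces the single differing coordinate to have magnitude \emph{exactly} one. This is precisely what pins the sensitivity down to $\|g\|_p$, rather than leaving it as a supremum over perturbations of unbounded size (as would occur for real-valued streams under the same unit-$\ell_1$ adjacency).
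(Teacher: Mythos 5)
Your proof is correct and follows essentially the same route as the paper's: reduce via linearity to the response to a unit impulse $u-u'=\pm\delta$, so that $\|\mathcal G(u-u')\|_p=\|g\|_p$, with the $p=2$ case identified with the $\mathcal H_2$ norm by Parseval. You simply spell out more carefully the points the paper leaves implicit (integer-valuedness forcing the single differing entry to have magnitude exactly one, and time-shift invariance of the $\ell_p$ norm), which is a welcome but not substantively different elaboration.
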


\begin{proof}
For two adjacent binary-valued signals $u, u'$, 
we have that $u-u'$ is a positive or negative 
impulse signal $\delta$, and hence
\ifthenelse {\boolean{TwoColEq}}
{
\begin{align*}
\|\mathcal Gu - \mathcal Gu'\|_p &= \|\mathcal G(u-u')\|_p = \|\mathcal G \delta\|_p = \|g * \delta\|_p \\
&= \|g\|_p.
\end{align*}
}
{
\begin{align*}
\|\mathcal Gu - \mathcal Gu'\|_p &= \|\mathcal G(u-u')\|_p = \|\mathcal G \delta\|_p = \|g * \delta\|_p = \|g\|_p.
\end{align*}
}
\end{proof}

We measure the utility of specific schemes throughout this section by
the MSE between the published and desired outputs.
Similarly to our discussion at the end of Section \ref{section: DP linear systems},
there are two straightforward mechanisms that provide differential privacy.
One can add white noise $w$ directly on the input signal, with
$w_t \sim \Lap(1/\epsilon)$ for the Laplace mechanism and
$w_t \sim \mathcal N(0,\kappa(\delta,\epsilon))$ for the Gaussian mechanism.
Or one can add noise at the output of the filter $\mathcal G$, with $w_t \sim \Lap(\|g\|_1/\epsilon)$
for the Laplace mechanism and $w_t \sim \mathcal N(0,\|g\|_2 \kappa(\delta,\epsilon))$ for the
Gaussian mechanism. 
For the Gaussian mechanism, one obtains in both cases an MSE 
equal to $\|\mathcal G\|^2_2 \; \kappa(\delta,\epsilon)^2$.
For the Laplace mechanism, it is always better 
to add the noise at the input. Indeed, we obtain in this case an MSE 
of $2 \|g\|_2^2 / \epsilon^2$ instead of the greater $2 \|g\|_1^2 / \epsilon^2$
if the noise is added at the output.

\begin{figure}
\centering
\includegraphics[height=3cm]{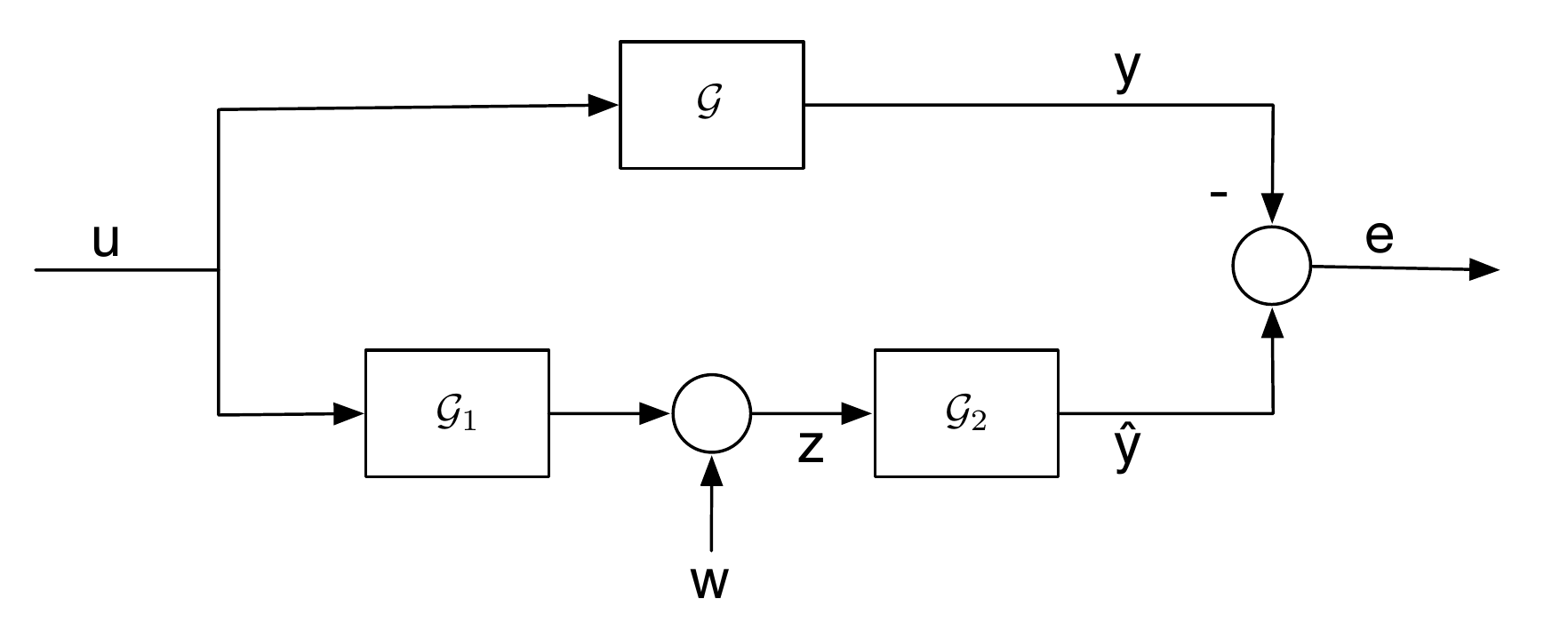}
\caption{Differentially private filter approximation set-up.}
\label{fig: DP filter approximation}
\end{figure}

We now generalize these mechanisms to the approximation set-up shown on Fig. \ref{fig: DP filter approximation}. 
The previous mechanisms are recovered when $\mathcal G_1$ or $\mathcal G_2$ is the identity operator.
To show that one can improve the utility of the mechanism with this set-up, consider the following choice of filters 
$\mathcal G_1$ and $\mathcal G_2$. Let $\mathcal G_1$ be a stable, minimum phase filter (hence invertible). 
Let $\mathcal G_2 = \mathcal G \mathcal G_1^{-1}$. We call this particular
choice the \emph{zero forcing equalization} (ZFE) mechanism. 
To guarantee $(\epsilon,\delta)$-differential privacy,
the noise $w$ is chosen to be white Gaussian 
with $\sigma = \kappa(\delta,\epsilon) \|\mathcal G_1\|_2$.
The MSE for the ZFE mechanism is 
\ifthenelse {\boolean{TwoColEq}} 
{
\begin{align*}
e^{ZFE}_{mse} := 
&\lim_{T \to \infty} \frac{1}{T} \sum_{t=0}^\infty \Exp[\| (\mathcal Gu)_t - (\mathcal Gu + \mathcal G \mathcal G_1^{-1}w)_t\|^2_2] \\
=& \lim_{T \to \infty} \frac{1}{T} \sum_{t=0}^\infty \Exp[\|(\mathcal G \mathcal G_1^{-1}w)_t\|_2^2] \\
=& \kappa(\epsilon, \delta)^2 \|\mathcal G_1\|_2^2 \|\mathcal G \mathcal G_1^{-1}\|^2_2.
\end{align*}
}
{
\begin{align*}
e^{ZFE}_{mse} := 
&\lim_{T \to \infty} \frac{1}{T} \sum_{t=0}^\infty \Exp[\| (\mathcal Gu)_t - (\mathcal Gu + \mathcal G \mathcal G_1^{-1}w)_t\|^2_2] \\
=& \lim_{T \to \infty} \frac{1}{T} \sum_{t=0}^\infty \Exp[\|(\mathcal G \mathcal G_1^{-1}w)_t\|_2^2] 
= \kappa(\epsilon, \delta)^2 \|\mathcal G_1\|_2^2 \|\mathcal G \mathcal G_1^{-1}\|^2_2.
\end{align*}
}
Hence we are lead to consider the following problem
\ifthenelse {\boolean{TwoColEq}} 
{
\begin{align*}
&\min_{\mathcal G_1} \|\mathcal G_1\|_2^2 \|\mathcal G _\mathcal G_1^{-1}\|^2_2 \\
&= \min_{\mathcal G_1} \frac{1}{4 \pi^2} \int_{-\pi}^\pi |\mathcal G_1(e^{j\omega})|^2 d \omega 
\int_{-\pi}^\pi \left| \frac{\mathcal G(e^{j\omega})}{\mathcal G_1(e^{j \omega})} \right|^2 d \omega,
\end{align*}
}
{
\begin{align*}
\min_{\mathcal G_1} \|\mathcal G_1\|_2^2 \|\mathcal G \mathcal G_1^{-1}\|^2_2 
= \min_{\mathcal G_1} \frac{1}{4 \pi^2} \int_{-\pi}^\pi |\mathcal G_1(e^{j\omega})|^2 d \omega 
\int_{-\pi}^\pi \left| \frac{\mathcal G(e^{j\omega})}{\mathcal G_1(e^{j \omega})} \right|^2 d \omega,
\end{align*}
}
where the minimization is over the stable, minimum phase transfer functions $\mathcal G_1$. 

\begin{thm}	\label{thm: error for ZFE mechanism}
We have, for any stable, minimum phase system $\mathcal G_1$, 
\[
e^{ZFE}_{mse} \geq \kappa(\epsilon, \delta)^2 \left(\frac{1}{2 \pi} \int_{-\pi}^\pi |\mathcal G(e^{j\omega})| d \omega \right)^2.
\]
This lower bound on the mean-squared error of the ZFE mechanism is
attained by letting $|\mathcal G_1(e^{j \omega})|^2 = \lambda |\mathcal G(e^{j \omega})|$ for all $\omega \in [-\pi,\pi)$,
where $\lambda$ is some arbitrary positive number.
It can be approached arbitrarily closely by stable, rational, 
minimum phase transfer functions $\mathcal G_1$.
\end{thm}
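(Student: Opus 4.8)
The plan is to recognize the quantity $\|\mathcal G_1\|_2^2 \, \|\mathcal G \mathcal G_1^{-1}\|_2^2$ as a product of two $L^2$ integrals to which the Cauchy--Schwarz inequality applies directly. First I would write both $\mathcal H_2$ norms in the frequency domain and introduce $a(\omega) = |\mathcal G_1(e^{j\omega})|$ and $b(\omega) = |\mathcal G(e^{j\omega})| / |\mathcal G_1(e^{j\omega})|$, chosen so that $a(\omega)\,b(\omega) = |\mathcal G(e^{j\omega})|$. Cauchy--Schwarz in $L^2([-\pi,\pi))$ then gives
\[
\left( \int_{-\pi}^\pi |\mathcal G(e^{j\omega})| \, d\omega \right)^2 = \left( \int_{-\pi}^\pi a\, b \, d\omega \right)^2 \leq \int_{-\pi}^\pi |\mathcal G_1(e^{j\omega})|^2 \, d\omega \int_{-\pi}^\pi \left| \frac{\mathcal G(e^{j\omega})}{\mathcal G_1(e^{j\omega})} \right|^2 d\omega.
\]
Dividing by $4\pi^2$ and multiplying by $\kappa(\epsilon,\delta)^2$ reproduces exactly the claimed lower bound on $e^{ZFE}_{mse}$.

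For the equality case, I would recall that Cauchy--Schwarz is tight precisely when $a$ and $b$ are proportional almost everywhere, i.e.\ when $|\mathcal G_1(e^{j\omega})|^2 \propto |\mathcal G(e^{j\omega})|$. Substituting $|\mathcal G_1(e^{j\omega})|^2 = \lambda |\mathcal G(e^{j\omega})|$ into the two integrals yields $\|\mathcal G_1\|_2^2 = \frac{\lambda}{2\pi}\int|\mathcal G|$ and $\|\mathcal G \mathcal G_1^{-1}\|_2^2 = \frac{1}{2\pi\lambda}\int|\mathcal G|$, whose product is manifestly independent of $\lambda$ and equals the lower bound; this simultaneously confirms the optimal magnitude profile and the freedom in the scalar $\lambda > 0$.

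The remaining and most delicate step is to show this optimum is approachable within the admissible class of stable, rational, minimum-phase filters. The obstacle is twofold: the target magnitude $\sqrt{\lambda |\mathcal G(e^{j\omega})|}$ is generally irrational, since it extracts a square root of the rational quantity $|\mathcal G(e^{j\omega})|^2$; and it may vanish wherever $\mathcal G$ has a zero on the unit circle, which would force a non-invertible $\mathcal G_1$ and jeopardize stability of $\mathcal G_2 = \mathcal G \mathcal G_1^{-1}$. To circumvent both issues I would first replace the target by the strictly positive continuous function $\lambda(|\mathcal G(e^{j\omega})| + \eta)$ with $\eta > 0$, then invoke the Weierstrass/Fej\'er approximation theorem to approximate it uniformly by a strictly positive trigonometric polynomial $R(\omega)$. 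The Fej\'er--Riesz theorem supplies a spectral factorization $R(\omega) = |\mathcal G_1(e^{j\omega})|^2$ with $\mathcal G_1$ a polynomial, hence rational and stable, and (taking the factor with all roots strictly inside the unit disk) minimum phase and invertible. Finally, letting the uniform approximation error and then $\eta$ tend to zero, continuity of both frequency-domain integrals with respect to the uniform norm of $|\mathcal G_1|^2$ would drive $e^{ZFE}_{mse}$ to the lower bound. The main care required is in controlling $\int |\mathcal G / \mathcal G_1|^2$ near the zeros of $\mathcal G$, which is exactly the role played by the perturbation $\eta$ introduced before factorization.
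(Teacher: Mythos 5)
Your proposal is correct and follows essentially the same route as the paper: Cauchy--Schwarz on the product of the two frequency-domain integrals, the proportionality condition $|\mathcal G_1|^2 = \lambda|\mathcal G|$ for equality, and Weierstrass approximation followed by spectral factorization for the rational approximation step. Your treatment of the last step is in fact somewhat more careful than the paper's (which simply invokes a rational positive approximant and its minimum-phase spectral factor), since you explicitly handle possible zeros of $\mathcal G$ on the unit circle via the $\eta$-perturbation before factorizing.
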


\begin{proof}
By the Cauchy-Schwarz inequality, we have
\ifthenelse {\boolean{TwoColEq}} 
{
\begin{align*}
&\left( \int_{-\pi}^\pi |\mathcal G(e^{j\omega})| d \omega \right)^2 =
\left(\int_{-\pi}^\pi  |\mathcal G_1(e^{j\omega})| \left| \frac{\mathcal G(e^{j\omega})}{\mathcal G_1(e^{j \omega})} \right| d \omega \right)^2 \\
&\leq \int_{-\pi}^\pi |\mathcal G_1(e^{j\omega})|^2 d \omega 
\int_{-\pi}^\pi \left| \frac{\mathcal G(e^{j\omega})}{\mathcal G_1(e^{j \omega})} \right|^2 d \omega,
\end{align*}
}
{
\begin{align*}
\left( \int_{-\pi}^\pi |\mathcal G(e^{j\omega})| d \omega \right)^2 =
\left(\int_{-\pi}^\pi  |\mathcal G_1(e^{j\omega})| \left| \frac{\mathcal G(e^{j\omega})}{\mathcal G_1(e^{j \omega})} \right| d \omega \right)^2 
\leq \int_{-\pi}^\pi |\mathcal G_1(e^{j\omega})|^2 d \omega 
\int_{-\pi}^\pi \left| \frac{\mathcal G(e^{j\omega})}{\mathcal G_1(e^{j \omega})} \right|^2 d \omega,
\end{align*}

}
hence the bound. Moreover, equality is attained if and only if there exists $\lambda \in \mathbb R$
such that
\ifthenelse {\boolean{TwoColEq}} 
{
\begin{align*}
& |\mathcal G_1(e^{j\omega})| = \lambda \left| \frac{\mathcal G(e^{j\omega})}{\mathcal G_1(e^{j \omega})} \right|, \\
\text{i.e., } & |\mathcal G_1(e^{j\omega})|^2 = \lambda |\mathcal G(e^{j\omega})|, \;\; \forall \omega \in \mathbb R.
\end{align*}
}
{
\begin{align*}
|\mathcal G_1(e^{j\omega})| = \lambda \left| \frac{\mathcal G(e^{j\omega})}{\mathcal G_1(e^{j \omega})} \right|, 
\text{i.e., } |\mathcal G_1(e^{j\omega})|^2 = \lambda |\mathcal G(e^{j\omega})|, \;\; \forall \omega \in \mathbb R.
\end{align*}
}
To see that the bound can be approached using finite-dimensional filters,
by Weierstrass theorem we can first approximate $|\mathcal G(e^{j \omega})|$ 
arbitrarily closely by a rational positive function $\hat{\mathcal G}$. 
We then set $\mathcal G_1$ to be the minimum-phase spectral factor of $\hat{\mathcal G}$.
\end{proof}

The MSE obtained for the best ZFE mechanism 
in Theorem \ref{thm: error for ZFE mechanism} cannot be worse than
the MSE for the scheme adding noise at the input, and is generally strictly
smaller, since by Jensen's inequality we have
\[
\left(\int_{-\pi}^\pi |\mathcal G(e^{j\omega})| \frac{d \omega}{2 \pi}  \right)^2
\leq \int_{-\pi}^\pi |\mathcal G(e^{j\omega})|^2 \frac{d \omega}{2 \pi} = \|\mathcal G\|_2^2.
\]
In addition, the MSE of the ZFE mechanism is independent of the input signal $u$. 
However, a smaller error could be obtained with other schemes, 
in particular schemes that exploit some knowledge about the input signal. 
Note that once $\mathcal G_1$ is chosen, designing $\mathcal G_2$ is a standard 
equalization problem \cite{Proakis00_digitalComBook}. The name of the
ZFE mechanism is motivated by the choice of trying to cancel the effect of 
$\mathcal G_1$ by using its inverse (zero forcing equalizer).
Nonlinear components can be very useful as well. In particular if we add the hypothesis
that the input signal is binary valued, as in \cite{Dwork10_DPcounter, Chan10_counter}, 
we can modify the simple scheme adding noise at the input by including a detector $H$ in front of the system $\mathcal G$, 
namely, for $\hat u_t = u_t + w_t$,
 \[
 H(\hat u_t) = \begin{cases}
 1, \;\; \hat u_t \geq 1/2, \\
 0, \;\; \hat u_t < 1/2.
 \end{cases}
 \]
 This exploits the knowledge that the input signal is binary valued, preserves differential privacy
 by Theorem \ref{thm: resilience to post-processing}, and sometimes significantly improves
 the MSE, depending on other characteristics of the signal.
 

\subsection{Exploiting Additional Public Knowledge}

To further illustrate the idea of exploiting potentially available additional knowledge about the input signal, consider using
a minimum mean squared error (MMSE) estimator for $\mathcal G_2$ rather than employing $ \mathcal G \mathcal G_1^{-1}$, 
since the latter can significantly amplify the noise at frequencies where $\mathcal G_1$ is small. 
Let us assume that $\mathcal G_1$ is already chosen, e.g., according to Theorem \ref{thm: error for ZFE mechanism}
(this choice is not optimal any more if $\mathcal G_2$ is not $\mathcal G\mathcal G_1^{-1}$). 
%
%
Moreover, assume that that it is publicly known that $u$ is wide-sense stationary 
with mean and autocorrelation denoted
\[
\Exp[u_t] = \mu, \;\; \Exp[u_s u_t] =: R_u[s-t].
\]
From this data, the second order statistics of $y$ and $z$ on Fig. $1$ are also known, 
in particular
\[
R_z = f * \tilde f * R_u + \sigma^2 \delta,  R_{yz} = g * \tilde f * R_u,
\]
where $\sigma^2 = \kappa(\delta,\epsilon)^2 \| \mathcal G_1\|_2^2$, $\delta$ is the impulse signal,
$f$ is the impulse response of $\mathcal G_1$, 
and $\tilde f_t = f_{-t}$.
We then design $\mathcal G_2$ to minimize the MSE
\[
\Exp[|y_t - \hat y_t|^2].
\]
For simplicity, consider the case where $\mathcal G_2$ is restricted to be a finite-impulse response filter, i.e.,
\[
\hat y_t = (\mathcal G_2 z)_t = \sum_{k=0}^N h_k z_{t-k},
\]
where $N$ is the order of the filter. The vector $h=[h_0,\ldots,h_N]^T$ is the solution of the Yule-Walker equations \cite{Poor94_SPbook}
\[
\begin{bmatrix}
R_z[0] & R_z[1] & \ldots & R_z[N]    \\
R_z[1] & R_z[0] & \ldots & R_z[N-1] \\
\vdots & \vdots & \vdots & \vdots \\
R_z[N] & \ldots & \ldots & R_z[0]
\end{bmatrix}
h
=
\begin{bmatrix}
R_{yz}[0] \\
\vdots \\
R_{yz}[N]
\end{bmatrix}
\]
According to Theorem \ref{thm: resilience to post-processing}, 
differential privacy is preserved since the filter $\mathcal G_2$ only processes 
the already differentially private signal $z$.
Even if the statistical assumptions turn out not to be satisfied
by $u$, the privacy guarantee still holds and only performance
is impacted.

\begin{exmp}
Fig. \ref{fig: binary filtering illustration} illustrates the differentially private
output obtained by the MMSE mechanism approximating the filter
$\mathcal G = 1/(s(z)+0.05)$, with $s(z$) the bilinear transformation
\[
s(z) = 2 \frac{1-z^{-1}}{1+z^{-1}}.
\]
The input signal is binary valued and the privacy parameters are set to $\epsilon = \ln 3$, $\delta = 0.05$.
For this specific input, the empirical MSE of the ZFE is $5.8$, 
compared to $4.6$ for the MMSE mechanism.
The simpler scheme with noise added at the input is essentially unusable, since its
MSE is $\kappa(\delta,\epsilon)^2 \|\mathcal G\|_2^2 \approx 30.1$.
Adding a detector reduces this MSE to about $17$.
\begin{figure}
\centering
\includegraphics[width=8cm,height=7cm]{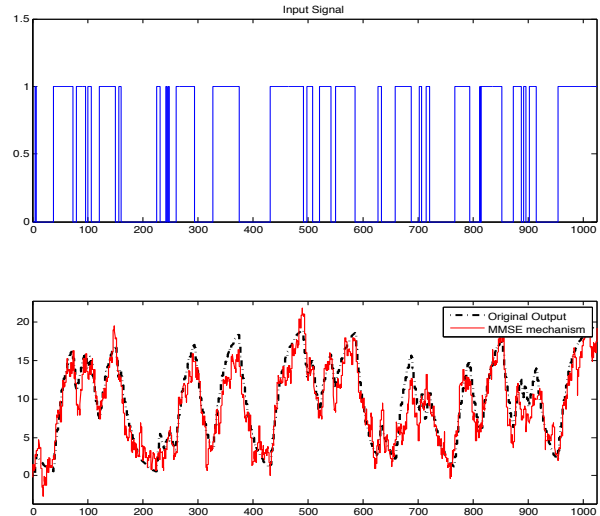}
\caption{Sample path for the MMSE mechanism.}
\label{fig: binary filtering illustration}
\end{figure}
\end{exmp}

\subsection{Related Work}	\label{section: related work}


Some papers closely related to the event filtering problem considered in this section are 
\cite{Dwork10_DPcounter, Rastogi10_DPtimeSeries, Chan11_DPcontinuous, Bolot11_DPdecayingSums}.
As previously mentioned, \cite{Dwork10_DPcounter, Rastogi10_DPtimeSeries} consider an unstable filter, 
the accumulator. The techniques employed there are quite different, relying essentially on binary
trees to keep track of intermediate calculations and reduce the amount of noise introduced 
by the privacy mechanism. Bolot et al. \cite{Bolot11_DPdecayingSums} extend this technique 
to the differentially private approximation of certain filters with monotonic, 
slowly decaying impulse response. In fact, this technique can be extended to general 
linear systems by using a state-space realization and keeping track of the system state at 
carefully chosen times in a binary tree.
However, the usefulness of this approach seems to be limited for most practical stable filters,
the resulting MSE being typically too large and the implementation of the scheme significantly more 
complex than for a simple recursive filter.


Finally, as with the MMSE estimation mechanism, 
one can try to use additional information 
about the input signals to calibrate the amount 
of noise introduced by the privacy mechanism. 
For example, if there exists a sparse representation 
of the signal in some basis (such as a Fourier or a wavelet basis),
then one can try to perturb the representation coefficients in this alternate basis. 
For example, \cite{Rastogi10_DPtimeSeries} perturbs the largest coefficients of the
Discrete Fourier Transform of the signal. 
A difficulty with such approaches 
is that they are typically not causal and not recursive, 
requiring an amount of processing that increases with time.

\section{Conclusion}

We have discussed mechanisms for preserving the differential privacy of individual users
transmitting time-varying signals to a trusted central server releasing sanitized filtered outputs
based on these inputs. Decentralized versions of the mechanism of Section \ref{section: DP linear systems} 
can in fact be implemented in the absence of trusted server by means of cryptographic techniques \cite{Rastogi10_DPtimeSeries}.
We believe that research on privacy issues is critical to encourage the development of future cyber-physical systems, 
which typically rely on the users data to improve their efficiency. 
Numerous directions of study are open for dynamical systems, including designing better filtering mechanisms, 
and understanding design trade-offs between privacy or security and performance in large-scale control systems.


%
%
%
%
%
%
\section*{Acknowledgment}
The authors would like to thank Aaron Roth for providing valuable insight into
the notion of differential privacy.
%
%




\bibliographystyle{IEEEtran/IEEEtran}


\bibliography{IEEEtranBST/IEEEabrv,/Users/jerome/Dropbox/Research/bibtex/energy,/Users/jerome/Dropbox/Research/bibtex/probability,/Users/jerome/Dropbox/Research/bibtex/securityPrivacy,/Users/jerome/Dropbox/Research/bibtex/signalProcessing,/Users/jerome/Dropbox/Research/bibtex/controlSystems,/Users/jerome/Dropbox/Research/bibtex/communications} 

\end{document}